\def\CC{\mathbb{C}} 
\def\DD{\mathbb{D}} 
\def\RR{\mathbb{R}} 
\def\TT{\mathbb{T}} 
\def\SS{\mathbb{S}} 
\def\HHL{\mathbb{H}_{-}} 
\def\CDD{\overline{\DD}} 
\def\OO{\mathcal{O}} 
\def\CL{\mathcal{C}} 
\def\RE{\textnormal{Re}\,} 
\def\IM{\textnormal{Im}\,} 
\def\INT{\textnormal{int}\,} 
\def\SPAN{\textnormal{span}\,}
\def\PDER#1#2{\frac{\partial #1}{\partial #2}} 
\def\DDER#1#2{\frac{d #1}{d #2}} 
\def\LEBT{\mathcal{L}^{\TT}} 
\def\DLEBT{d\mathcal{L}^{\TT}} 
\def\BLHL#1{\bar\lambda h_{#1}(\lambda)} 
\newtheoremstyle{remarkstyle}{}{}{}{}{\bf}{.}{ }{}
\newtheorem{THE}{Theorem}[section]
\newtheorem{PROP}[THE]{Proposition}
\newtheorem{LEM}[THE]{Lemma}
\newtheorem{OBS}[THE]{Observation}
\theoremstyle{remarkstyle}
\newtheorem{EX}[THE]{Example}
\newtheorem{DEF}[THE]{Definition}
\begin{document}

\renewcommand{\thepage}{\small\arabic{page}}
\renewcommand{\thefootnote}{(\arabic{footnote})}
\renewcommand{\subsection}{\arabic{section}}
\renewcommand{\thesubsection}{\arabic{subsection}}

\renewcommand\Affilfont{\small}



\author{Sylwester Zaj\k{a}c}
\affil{Institute of Mathematics, Faculty of Mathematics and Computer Science,\\ Jagiellonian University, \L ojasiewicza 6, 30-348 Krak\'ow, Poland\\ sylwester.zajac@im.uj.edu.pl}

\title{Complex geodesics in convex tube domains}

\date{}

\maketitle

\begin{abstract}
%
We describe all complex geodesics in convex tube domains.
In the case when the base of a convex tube domain does not contain any real line, the obtained description involves the notion of boundary measure of a holomorphic map and it is expressed in the language of real Borel measures on the unit circle.
Applying our result, we calculate all complex geodesics in convex tube domains with unbounded base covering special class of Reinhardt domains.
\end{abstract}




\section{Introduction}\label{sect_introduction}

A domain $D\subset\CC^n$ is called a tube if $D$ is of the form $\Omega+i\RR^n$ for some domain $\Omega\subset\RR^n$, called the base of $D$.
Tube domains play an important role in studies of Reinhardt domains, as any Reinhardt domain contained in $(\CC_*)^n$ admits a natural covering by a tube domain via the mapping $(z_1,\ldots,z_n)\mapsto (e^{z_1},\ldots,e^{z_n})$.
What is more, pseudoconvex Reinhardt domains contained in $(\CC_*)^n$ are exactly those which are covered by convex tubes.

We are interested in tube domains mostly from the point of view of holomorphically invariant distances and the Lempert theorem.
It is known that if $G\subset\CC^n$ is a pseudoconvex Reinhardt domain and $D\subset\CC^n$ is a convex tube covering $G\cap (\CC_*)^n$, then any $\ell_G$-extremal disc ($\ell_G$ is the Lempert function for $G$) which does not intersect the axes can be lifted to a complex geodesic in $D$.
As a consequence, if we know the form of all complex geodesics in $D$, we obtain a form of all $\ell_G$-extremal discs not intersecting the axes (more precisely, we get a necessary condition for a map to be a $\ell_G$-extremal, as not every geodesic in $D$ produces extremal disc in $G$).

The problem of characterising complex geodesics for convex tubes may be reduced to the case of taut convex tubes (equivalently: hyperbolic convex tubes, convex tubes without real lines contained in the base; see \cite[Theorem 1.1]{braccisarcco}).
This is a consequence of the fact that any convex tube in $\CC^n$ is linearly biholomorphic to a cartesian product of a taut convex tube and some $\CC^k$ (see Observation \ref{obs_only_bounded_from_right}).
If $D\subset\CC^n$ is a taut convex tube domain and $\varphi:\DD\to D$ is a holomorphic map, then radial limits of $\varphi$ exist almost everywhere and - what is important - $\varphi$ admits a boundary measure (see Section \ref{sect_preliminaries} for details).

In Section \ref{sect_characterisations} we give two descriptions of complex geodesics in taut convex tubes in $\CC^n$.
A starting point for us is the characterisation for bounded convex domains presented in \cite{roydenwong} and \cite[Subsection 8.2]{jarnickipflug} - it states that a holomorphic map $\varphi:\DD\to D$ is a complex geodesic for a bouded convex domain $D\subset\CC^n$ if and only if there exists a map $h:\DD\to\CC^n$ of class $H^1$ such that $\RE\left[\bar\lambda h^*(\lambda)\bullet(z-\varphi^*(\lambda))\right]<0$ for all $z\in D$ and almost every $\lambda\in\TT$.
It does not work in the case when $D$ is a convex tube with unbounded base, because the condition just mentioned is not sufficient for $\varphi$ to be a geodesic - even in the simpliest case when $D$ is a left half-plane in $\CC$.
We found it possible to add to this condition another one to obtain the equivalence (Theorem \ref{th_wkw_radial_limits}; the map $h$ takes then the form $\bar a\lambda^2+b\lambda+a$, because $D$ is a tube).
Unfortunately, the condition added is not helpful for us in calculating complex geodesics, as it is not a 'boundary condition' (it refers not only to boundary properties of $\varphi$), while the condition with radial limits seemed to be too weak, because generally radial limits of a map which is not of class $H^1$ do not give much information about it.
Our main idea was to replace these two conditions with another one, making use of the notion of boundary measures of holomorphic maps.
We obtain the following characterisation of geodesics, expressed in the language of real Borel measures on the unit circle:

\begin{THE}\label{th_wkw_miara}
Let $D\subset\CC^n$ be a taut convex tube domain and let $\varphi:\DD\to D$ be a holomorphic map with the boundary measure $\mu$.
Then $\varphi$ is a complex geodesic for $D$ iff there exists a map $h:\CC\to\CC^n$ of the form $\bar a\lambda^2+b\lambda+a$ with some $a\in\CC^n$, $b\in\RR^n$, such that $h\not\equiv 0$ and the measure $$\BLHL{}\bullet(\RE z\,\DLEBT(\lambda)-d\mu(\lambda))$$ is negative for every $z\in D$.
\end{THE}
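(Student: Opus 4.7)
The plan is to deduce Theorem~\ref{th_wkw_miara} from Theorem~\ref{th_wkw_radial_limits} by rewriting the two conditions of the latter (the radial-limit inequality and the added non-boundary condition) as a single condition on the boundary measure $\mu$. The key preliminary observation is that for $h(\lambda) = \bar a\lambda^2 + b\lambda + a$ with $a \in \CC^n$ and $b \in \RR^n$, one has on $\TT$
\[
\bar\lambda h(\lambda) = \bar a\lambda + b + a\bar\lambda,
\]
whose $k$-th component equals $2\RE(\bar a_k\lambda) + b_k \in \RR$. Hence $\bar\lambda h(\lambda)$ is $\RR^n$-valued on $\TT$, which guarantees both that $\bar\lambda h(\lambda)\bullet d\nu$ is a well-defined real signed Borel measure for every $\RR^n$-valued Borel measure $\nu$ on $\TT$, and that
\[
\RE\bigl[\bar\lambda h^*(\lambda)\bullet(z - \varphi^*(\lambda))\bigr] = \bar\lambda h(\lambda)\bullet\bigl(\RE z - \RE\varphi^*(\lambda)\bigr)
\]
for a.e.\ $\lambda \in \TT$ and every $z \in \CC^n$.

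Next I would invoke the Lebesgue decomposition $d\mu = \RE\varphi^*\,\DLEBT + d\mu_s$, available from the properties of boundary measures of holomorphic maps into taut convex tubes recalled in Section~\ref{sect_preliminaries}. Inserting this into the candidate measure gives
\[
\bar\lambda h(\lambda)\bullet\bigl(\RE z\,\DLEBT - d\mu\bigr) = \bigl[\bar\lambda h(\lambda)\bullet(\RE z - \RE\varphi^*(\lambda))\bigr]\DLEBT \;-\; \bar\lambda h(\lambda)\bullet d\mu_s.
\]
Since only the absolutely continuous summand depends on $z$, requiring this signed measure to be negative for every $z \in D$ is equivalent to the conjunction of
\begin{enumerate}[label=(\roman*)]
\item $\bar\lambda h(\lambda)\bullet(\RE z - \RE\varphi^*(\lambda)) \le 0$ for a.e.\ $\lambda \in \TT$ and every $z \in D$,
\item $\bar\lambda h(\lambda)\bullet d\mu_s \ge 0$ as a real signed measure on $\TT$.
\end{enumerate}
By the identity from the first step, (i) is precisely the radial-limit inequality appearing in Theorem~\ref{th_wkw_radial_limits}.

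The main obstacle — and the heart of the argument — is to match condition (ii) with the additional (non-boundary) condition of Theorem~\ref{th_wkw_radial_limits}. Being independent of $z$, (ii) can only encode information carried by the singular part of $\mu$, namely the failure of $\varphi$ to belong to $H^1$; the additional condition in Theorem~\ref{th_wkw_radial_limits} is exactly what compensates this failure in the bounded formulation. I would establish the match by dualising through supporting functionals of the base $\Omega$: the tautness hypothesis ($\Omega$ contains no real line) allows one to describe $\mu_s$ in terms of the directions in which $\RE\varphi$ escapes to infinity, and to translate sign tests against $\bar\lambda h(\lambda)$ into the additional condition of Theorem~\ref{th_wkw_radial_limits}. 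Once this identification is in place, both implications of Theorem~\ref{th_wkw_miara} follow from Theorem~\ref{th_wkw_radial_limits} together with the decomposition above.
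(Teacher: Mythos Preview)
Your Lebesgue decomposition of $\mu$ into $\RE\varphi^*\,d\LEBT+d\mu_s$ and the resulting split of the measure condition into your (i) and (ii) are correct and clean; your (i) is exactly the radial-limit inequality of Theorem~\ref{th_wkw_radial_limits} (the passage from $\le$ to $<$ is handled, as in the paper, by openness of the affine map $z\mapsto\bar\lambda h(\lambda)\bullet z$).

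The gap is in matching your singular-part condition (ii) with condition~\ref{th_wkw_radial_limits_2} of Theorem~\ref{th_wkw_radial_limits}. Your proposed mechanism --- ``dualising through supporting functionals of the base $\Omega$'' and describing $\mu_s$ via ``directions in which $\RE\varphi$ escapes to infinity'' --- does not lead anywhere concrete: condition~\ref{th_wkw_radial_limits_2} is the statement $\RE\bigl[h(\lambda)\bullet\frac{\varphi(0)-\varphi(\lambda)}{\lambda}\bigr]<0$ on $\DD_*$, which does not mention $D$ or its supporting hyperplanes at all, so there is nothing for a supporting-functional argument to grab onto. What you actually need is an analytic link between the \emph{interior} behaviour of $h(\lambda)\bullet\frac{\varphi(0)-\varphi(\lambda)}{\lambda}$ and the \emph{boundary} object $\mu_s$, and your outline does not supply one.

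The paper closes exactly this gap, but by a different route (Lemma~\ref{lem_wkw_h_fi_miara}): it introduces, for each $z\in D$, the holomorphic function
\[
\psi_z(\lambda)=\frac{\varphi(0)-\varphi(\lambda)}{\lambda}\bullet h(\lambda)+\frac{h(\lambda)-h(0)}{\lambda}\bullet(z-\varphi(0))+\lambda\,\overline{h(0)\bullet(z-\varphi(0))},
\]
shows by a direct Poisson-integral computation that $\psi_z\in\mathcal M$ with boundary measure precisely $\nu_z=\bar\lambda h(\lambda)\bullet(\RE z\,d\LEBT-d\mu)$, and then uses that $\RE\psi_z\le 0$ on $\DD$ iff $\nu_z\le 0$. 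Since $\RE\psi_z^*(\lambda)=\RE[\bar\lambda h(\lambda)\bullet(z-\varphi^*(\lambda))]$, the conditions of Theorem~\ref{th_wkw_radial_limits} are recovered. In particular, taking $z=\varphi(0)$ in this construction is exactly what would be needed to salvage your approach: it identifies $h(\lambda)\bullet\frac{\varphi(0)-\varphi(\lambda)}{\lambda}$ as a function in $\mathcal M$ whose boundary measure has $-\bar\lambda h(\lambda)\bullet d\mu_s$ as its singular part, so that (given (i)) your (ii) becomes equivalent to~\ref{th_wkw_radial_limits_2}. That computation, not a geometric dualisation, is the missing ingredient.
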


\noindent
The above description refers only to boundary properties of the map $\varphi$ (i.e. only to its boundary measure), what makes it really helpful in calculating complex geodesics for some tube domains with unbounded base.
We present how it works in Section \ref{sect_examples}, especially in our main example - for convex tubes in $\CC^2$ covering finite intersections of Reinhardt domains of the form $$\lbrace (z_1,z_2)\in\DD^2: 0< |z_1|^p |z_2|^q < \alpha\rbrace$$ with some $p,q>0$, $\alpha\in (0,1)$ (Example \ref{ex_klis_domain}).

\section{Preliminaries}\label{sect_preliminaries}

Let us begin with some notation: $\DD$ is the unit disc in $\CC$, $\TT$ is the circle $\partial\DD$, $\HHL$ is the left half-plane $\lbrace z\in\CC:\RE z<0\rbrace$, $\SS$ is the strip $\lbrace z\in\CC:\RE z\in(0,1)\rbrace$, $\LEBT$ is the Lebesgue measure on $\TT$, and $\delta_{\lambda_0}$ is the Dirac delta at a point $\lambda_0\in\TT$.
By $T_c$, $c\in\DD$, we denote the automorphism $\lambda\mapsto\frac{\lambda-c}{1-\bar c\lambda}$ of $\DD$.
For $z,w\in\CC^n$, $\langle z,w\rangle$ is the hermitian inner product in $\CC^n$, and $z\bullet w$ is the dot product, i.e. $z\bullet w=\langle z,\bar w\rangle$.
Vectors from $\CC^n$ are identified with vertical matrices $n\times 1$, and hence $z\bullet w=z^T\cdot w$, where for a matrix $A$ the symbol $A^T$ denotes the transpose of $A$ and $\cdot$ is the standard matrix multiplication. 
For a holomorphic map $\varphi:\DD\to\CC^n$, by $\varphi^*(\lambda)$ we denote the radial limit $\lim_{r\to 1^-}\varphi(r\lambda)$ of $\varphi$ at a point $\lambda\in\TT$, whenever it exists.
Finally, $\CL(\TT)$ is the space of all complex continuous functions on $\TT$, equipped with the supremum norm, and $H^p$, $p\in[1,\infty]$, is the Hardy space on the unit disc.

Let $D\subset\CC^n$ be a domain and let $\varphi:\DD\to D$ be holomorphic.
The map $\varphi$ is called a \emph{complex geodesic} in $D$ if $\varphi$ is an isometry with respect to the Poincar\'{e} distance in $\DD$ and the Carath\'{e}odory (pseudo)distance in $D$.
A holomorphic function $f:D\to\DD$ such that $f\circ\varphi=id_{\DD}$ is called a \emph{left inverse} of $\varphi$.
The map $\varphi$ is a complex geodesic in $D$ iff it admits a left inverse on $D$.
By the Lempert theorem, if $D$ is a taut convex domain, then for any pair of points in $D$ there exists a complex geodesic passing through them (see \cite{lempert} or \cite[Chapter 8]{jarnickipflug} for details).
%

\begin{DEF}\label{def_bounded_from_the_right}
We say that a domain $D\subset\CC^n$ is a convex tube if $D=\Omega+i\RR^n$ for some convex domain $\Omega\subset\RR^n$.
We call $\Omega$ the \emph{base} of $D$ and we denote it by $\RE D$.
\end{DEF}

As a quite easy consequence of \cite[Theorem 1.1, Propositions 1.2 and 3.5]{braccisarcco} we obtain the following decomposition:

\begin{OBS}\label{obs_only_bounded_from_right}
Let $D\subset\CC^n$ be a convex tube domain.
Then there exist a number $k\in\lbrace 0,\ldots n\rbrace$, a convex tube $G\subset\HHL^k$ and a complex affine isomorphism $\Phi$ of $\CC^n$ such that $\Phi(\RR^n)=\RR^n$ and $\Phi(D)=G\times\CC^{n-k}$.
Moreover, a holomorphic mapping $\varphi:\DD\to D$ is a complex geodesic for $D$ iff $(\Phi_1,\ldots,\Phi_k)\circ\varphi$ is a complex geodesic for $G$.
\end{OBS}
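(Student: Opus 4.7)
My plan is to decompose the base $\Omega := \RE D$ convex-geometrically so as to split off a trivial $\CC^{n-k}$ factor, then use an affine change of coordinates to place the remaining factor inside $\HHL^k$, and finally transport the geodesic condition across the trivial factor.

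First I would consider the lineality space $V := \{v \in \RR^n : \Omega + \RR v = \Omega\}$ of $\Omega$. A standard convex splitting (invoked via the cited results of \cite{braccisarcco}) gives $\Omega = (\Omega \cap V^\perp) + V$, with $\Omega_0 := \Omega \cap V^\perp$ a convex domain in $V^\perp$ containing no real line. An orthonormal basis of $\RR^n$ adapted to $V^\perp \oplus V$ extends $\CC$-linearly to an isomorphism $\Psi_1$ of $\CC^n$ preserving $\RR^n$, with $\Psi_1(D) = (\Omega_0' + i\RR^k) \times \CC^{n-k}$, where $k = n - \dim V$ and $\Omega_0' \subset \RR^k$ is a linear image of $\Omega_0$ still containing no real line.

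Next I would arrange $\Omega_0' \subset (-\infty,0)^k$. The recession cone $R$ of $\Omega_0'$ is closed, convex and pointed ($R \cap -R = \{0\}$, since $\Omega_0'$ contains no real line), so by cone duality $\text{int}\, R^*$ is nonempty. Picking a basis $w_1,\ldots,w_k$ of $\RR^k$ inside $\text{int}\, R^*$ makes each $\langle w_i,\cdot\rangle$ strictly positive on $R\setminus\{0\}$, and hence bounded above on $\Omega_0'$ (otherwise a rescaled unbounded sequence in $\Omega_0'$ would produce a nonzero recession direction on which $\langle w_i,\cdot\rangle \leq 0$). Using $-w_1,\ldots,-w_k$ as a coordinate system and translating suitably, one obtains a real affine isomorphism of $\RR^k$ sending $\Omega_0'$ into $(-\infty,0)^k$. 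Extending $\CC$-linearly and composing with $\Psi_1$ yields the desired $\Phi$, with $\Phi(D) = G \times \CC^{n-k}$ for some convex tube $G \subset \HHL^k$.

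For the geodesic equivalence, I would use that any holomorphic map $h : G \times \CC^{n-k} \to \DD$ is bounded, so for each fixed point in $G$ it is a bounded entire function on $\CC^{n-k}$ and hence constant by Liouville. Thus every such $h$ factors through the projection onto $G$, and $\Phi \circ \varphi$ admits a holomorphic left inverse iff its first $k$ coordinates $(\Phi_1,\ldots,\Phi_k)\circ\varphi$ form a complex geodesic in $G$. The main obstacle is the second step, since it is not immediate that a convex domain containing no real line fits affinely into an orthant; the key input is the duality between pointedness of $R$ and fullness of $R^*$, which provides enough linearly independent bounding directions to realize the embedding.
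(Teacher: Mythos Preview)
The paper gives no proof of this observation; it simply records it as an easy consequence of \cite[Theorem~1.1, Propositions~1.2 and~3.5]{braccisarcco}. Your argument supplies a direct convex-geometric proof --- lineality splitting, then a recession-cone duality argument to place the line-free factor in an orthant, then Liouville for the geodesic statement --- which is precisely the kind of reasoning behind the cited results, so in spirit the approaches match.

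One step, however, carries a sign error. If $w_i\in\INT R^*$, so that $\langle w_i,r\rangle>0$ for every $r\in R\setminus\{0\}$, then $\langle w_i,\cdot\rangle$ is bounded \emph{below} on $\Omega_0'$, not above: for any $x_0\in\Omega_0'$ and nonzero $r\in R$ the ray $x_0+tr$ ($t\ge0$) stays in $\overline{\Omega_0'}$ and $\langle w_i,x_0+tr\rangle\to+\infty$. (Your parenthetical argument, read carefully, produces a recession direction with $\langle w_i,\cdot\rangle\le 0$ only from a sequence along which $\langle w_i,\cdot\rangle\to-\infty$; that rules out unboundedness from below, not from above.) The repair is immediate: choose the basis $w_1,\ldots,w_k$ in $\INT(-R^*)$ instead, so that each $\langle w_i,\cdot\rangle$ is strictly negative on $R\setminus\{0\}$ and hence bounded above on $\Omega_0'$; then a suitable translation of $x\mapsto(\langle w_1,x\rangle,\ldots,\langle w_k,x\rangle)$ sends $\Omega_0'$ into $(-\infty,0)^k$, and there is no further need to pass to $-w_i$. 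With this correction your argument, including the Liouville step for the geodesic equivalence, is complete.
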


The number $n-k$ is equal to the maximal dimension of a real affine subspace contained in $\RE D$.
In view of the above observation, it is enough to restrict our considerations to taut convex tubes.
If $D$ is a taut convex tube, then $k=n$ and $\Phi(D)\subset\HHL^n$.
Moreover, if $\varphi:\DD\to D$ is a holomorphic map, then the non-tangential limit $\varphi^*(\lambda)$ exists end belongs to $\overline{D}$ for almost every $\lambda\in\TT$.
As we shall see later, $\varphi$ admits also a boundary measure.

Let us recall some facts connected with complex measures on the unit circle.
Below, we consider only Borel measures on $\TT$ so we shall usually omit the word 'Borel'.
It is known that any finite positive measure on $\TT$ is regular in the sense that the measure of any Borel subset $A\subset\TT$ may be approximated by the measures of both compact subsets and open supersets of $A$.
Hence, any complex measure on $\TT$ is regular, i.e. its variation is a regular measure.
In view of the Riesz representation theorem, complex measures on $\TT$ may be identified with continuous linear functionals on $\CL(\TT)$.

We shall use the symbols $\langle\cdot,\cdot\cdot\rangle$ and $\bullet$ also for measures and functions, e.g. if $\mu$ is a tuple $(\mu_1,\ldots,\mu_n)$ of complex measures and $v=(v_1,\ldots,v_n)$ is a vector or a bounded Borel-measurable mapping on $\TT$, then $\langle d\mu,v\rangle$ is the measure $\sum_{j=1}^n \bar{v_j} d\mu_j$, and $v\bullet d\mu$ is the measure $\sum_{j=1}^n  v_j d\mu_j$, etc.
The fact that a real measure $\nu$ is positive (resp. negative, null) is shortly denoted by $\nu\geq 0$ (resp. $\nu\leq 0$, $\nu=0$).

Introduce the family $$\mathcal{M}:=\lbrace f_\mu+i\alpha:\mu\text{ is a real measure on }\TT,\alpha\in\RR\rbrace,$$ where $f_\mu:\DD\to\CC$ is the holomorphic function given by
$$f_\mu(\lambda)=\frac{1}{2\pi}\int_{\TT}\frac{\zeta+\lambda}{\zeta-\lambda}d\mu(\zeta),\;\lambda\in\DD$$
(by a real measure we mean a complex measure with values in $\RR$).
It is known (see e.g. \cite[p. 10]{koosis}) that the measures $\RE f_\mu(r\cdot)\DLEBT$ tend weakly-* to $\mu$ when $r\to 1^-$ (as continuous linear functionals on $\CL(\TT)$, i.e. $\int_{\TT}u(\lambda)\RE f_\mu(r\lambda)\DLEBT(\lambda)\to\int_{\TT}u(\lambda)d\mu(\lambda)$ for any $u\in\CL(\TT)$) and $\mu$ is uniquely determined by $f_\mu$.
Thus, any $f\in\mathcal{M}$ has a unique decomposition $f=f_\mu+i\alpha$; then we call $\mu$ the \emph{boundary measure} of $f$.
One can check, that $\RE f\geq 0$ on $\DD$ iff its boundary measure is positive.

If a holomorphic function $f:\DD\to\CC$ is of class $H^1$, then it belongs to $\mathcal{M}$, its boundary measure is just $\RE f^*\DLEBT$ and the functions $f(r\cdot)$ tend to $f^*$ in the $L^1$ norm with respect to the measure $\LEBT$ (see e.g. \cite[p. 35]{koosis}).

By $\mathcal{M}^n$ we denote the set of all $\varphi=(\varphi_1,\ldots,\varphi_n)\in\OO(\DD,\CC^n)$ with $\varphi_1,\ldots,\varphi_n\in\mathcal{M}$.
In this situation, by the boundary measure of $\varphi$ we mean the $n$-tuple $\mu=(\mu_1,\ldots,\mu_n)$ of measures with each $\mu_j$ being the boundary measure of $\varphi_j$.
If $V$ is a real $m\times n$ matrix and $b\in\RR^m$, then the map $\lambda\mapsto V\cdot\varphi(\lambda)+b$ belongs to $\mathcal{M}^m$ and its boundary measure is just $V\cdot\mu+b\,d\LEBT$.

The Herglotz Representation Theorem (see e.g. \cite[p. 5]{koosis}) states that any $f\in\OO(\DD,\CC)$ with non-negative real part belongs to $\mathcal{M}$, and hence $\OO(\DD,\HHL^n)\subset\mathcal{M}^n$.
As a consequence of this fact and Observation \ref{obs_only_bounded_from_right}, we deduce that for any taut convex tube domain $D\subset\CC^n$ the family $\OO(\DD,D)$ is contained in $\mathcal{M}^n$, so any holomorphic map $\varphi:\DD\to D$ admits a boundary measure.

Let us emphasize that the Poisson formula
\begin{equation}\label{eq_poisson_formula}
\varphi(\lambda)=\frac{1}{2\pi}\int_{\TT}\frac{\zeta+\lambda}{\zeta-\lambda}d\mu(\zeta)+i\IM\varphi(0)\;\lambda\in\DD
\end{equation}
(the integral with respect to the tuple $\mu=(\mu_1,\ldots,\mu_n)$ is just the tuple of integrals with respect to $\mu_1,\ldots,\mu_n$) may be stated in terms of the boundary measure $\mu$ of $\varphi$, while it cannot be stated using only its radial limits (e.g. for $\varphi(\lambda)=\frac{1+\lambda}{1-\lambda}$ we have $\RE \varphi^*(\lambda)=0$ for a.e. $\lambda\in\TT$, while $\mu=2\pi\delta_1$).
This is an advantage of boundary measures over radial limits and the reason for which Theorem \ref{th_wkw_miara} is formulated in the language of the measure theory.

\section{Characterisations of complex geodesics}\label{sect_characterisations}

In this section we show two characterisations of complex geodesics in taut convex tubes.
The first one obtained in Theorem \ref{th_wkw_radial_limits} is expressed in terms of radial limits of a map and it is similar to the characterisation in the case of bounded convex domains presented in \cite{roydenwong} and \cite[Subsection 8.2]{jarnickipflug}, while the second one, expressed in the language of measures, is formulated in Theorem \ref{th_wkw_miara}.
Although calculating geodesics we mainly use Theorem \ref{th_wkw_miara}, the conditions appearing in Theorem \ref{th_wkw_radial_limits} are useful in deriving some additional information, as they are 'connected' with those in Theorem \ref{th_wkw_miara} via Lemma \ref{lem_wkw_h_fi_miara}.

Let us begin with a sufficient condition:

\begin{PROP}\label{prop_warunki_wystarczajace}
Let $D\subset\CC^n$ be a taut convex tube domain, and let $\varphi:\DD\to D$ be a holomorphic map.
Suppose that there exists a mapping $h:\CC\to\CC^n$ of the form $h(\lambda)=\bar a \lambda^2+b\lambda+a$ with some $a\in\CC^n,b\in\RR^n$, such that:
\begin{enumerate}
\renewcommand{\theenumi}{(\roman{enumi})}
\renewcommand{\labelenumi}{\theenumi}
\item\label{prop_warunki_wystarczajace_1} $\RE\left[\bar\lambda h(\lambda)\bullet(z-\varphi^*(\lambda))\right]<0$ for all $z\in D$ and a.e. $\lambda\in\TT$\footnote{That is, the statement holds for $(z,\lambda)\in D\times A$, where $A\subset\TT$ is a Borel subset of the full $\LEBT$ measure.},
\item\label{prop_warunki_wystarczajace_2} $\RE\left[h(\lambda)\bullet\frac{\varphi(0)-\varphi(\lambda)}{\lambda}\right]<0$ for every $\lambda\in\DD_*$.
\end{enumerate}
Then $\varphi$ is a complex geodesic for $D$.
\end{PROP}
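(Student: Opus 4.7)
The plan is to deduce that $\varphi$ is a complex geodesic by establishing its Kobayashi--Royden extremality at the base point: I will show that for every holomorphic $\psi\colon\DD\to D$ with $\psi(0)=\varphi(0)$ and $\psi'(0)=t\,\varphi'(0)$ ($t>0$) one must have $t\le 1$. In a taut convex domain, this infinitesimal extremality at a single point upgrades, via Lempert theory, to $\varphi$ being a complex geodesic (equivalently, to the existence of a holomorphic left inverse of $\varphi$ on $D$). By Observation~\ref{obs_only_bounded_from_right} I may assume $D\subset\HHL^n$ throughout.

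The central object is the scalar holomorphic function
$$g(\lambda):=\frac{h(\lambda)\bullet(\varphi(\lambda)-\varphi(0))}{\lambda},\qquad\lambda\in\DD,$$
which by hypothesis (ii) satisfies $\RE g>0$ on $\DD_*$, hence on all of $\DD$. By the Herglotz theorem $g\in\mathcal{M}$, and the minimum principle forces $\RE g(0)=\RE(a\bullet\varphi'(0))>0$. For a competitor $\psi$ I would form $\tilde g(\lambda):=h(\lambda)\bullet(\psi(\lambda)-\varphi(0))/\lambda$, also holomorphic on $\DD$, with $\tilde g(0)=a\bullet\psi'(0)=t\,(a\bullet\varphi'(0))$. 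The structural form of $h$ --- in particular that $b\in\RR^n$ --- makes $\bar\lambda h(\lambda)=a\bar\lambda+b+\bar a\lambda$ take values in $\RR^n$ on $\TT$, so that at the boundary
$$\RE\tilde g^*(\lambda)-\RE g^*(\lambda)=\bar\lambda h(\lambda)\bullet(\RE\psi^*(\lambda)-\RE\varphi^*(\lambda))\quad\text{a.e.\ on }\TT,$$
and hypothesis (i), passed to $z\to\psi^*(\lambda)\in\overline{D}$, forces this quantity to be $\le 0$ a.e. Lifting this boundary inequality to the interior and evaluating at $\lambda=0$ would yield $t\,\RE(a\bullet\varphi'(0))\le\RE(a\bullet\varphi'(0))$, whence $t\le 1$ as required.

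The main technical obstacle is this boundary-to-interior transfer. Since $\tilde g$ need not belong to the Herglotz class $\mathcal{M}$ (the sign of $\RE\tilde g$ being a priori uncontrolled), one cannot directly invoke a Poisson representation for $\tilde g$. The correct object is $g-\tilde g$, whose boundary real part is non-negative a.e.\ by (i); the task is to verify that $g-\tilde g$ admits a Poisson-type representation through which non-negativity of real part propagates from $\TT$ (a.e.) to all of $\DD$. This is delicate because $\psi\in\mathcal{M}^n$ may carry a singular, not merely $L^1$-absolutely continuous, boundary measure, and because the polynomial factor $h$ tangles the components of $\psi$ in a way that leaves the Herglotz framework. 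This is precisely where hypothesis (ii) --- an interior rather than a boundary statement --- plays its supplementary role beyond the Royden--Wong-type condition (i): it pins down $g$ pointwise on $\DD$ and supplies the Poisson-representable reference against which $\tilde g$ can be compared, allowing the boundary estimate on the difference to be propagated inward.
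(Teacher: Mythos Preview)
Your strategy --- reducing to infinitesimal Kobayashi extremality at $\varphi(0)$ and then invoking Lempert's theorem --- is different from the paper's, which directly builds a left inverse $f\in\OO(D,\DD)$ by solving $(z-\varphi(\lambda))\bullet h(\lambda)=0$ for $\lambda=f(z)$ via the argument principle and a Montel limit (see Lemma~\ref{lem_ogolne_warunki_wystarczajace}). Your route is in principle viable, but as written it contains a genuine gap at exactly the place you yourself flag.

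The problem is the last paragraph. You correctly observe that the a.e.\ boundary inequality $\RE\tilde g^*\le\RE g^*$ only sees the absolutely continuous part of the boundary measure of the competitor $\psi$, whereas $\psi\in\mathcal{M}^n$ may well carry singular mass (this is precisely the phenomenon the paper emphasizes, e.g.\ $\varphi(\lambda)=\frac{1+\lambda}{1-\lambda}$ with $\mu=2\pi\delta_1$). But then you do not actually close the gap: saying that hypothesis \ref{prop_warunki_wystarczajace_2} ``pins down $g$ pointwise'' and ``supplies the Poisson-representable reference'' is a description of a hope, not an argument. Knowing that $g$ is Herglotz tells you nothing, by itself, about the singular part of the boundary measure of $\tilde g$ (or of $g-\tilde g$), and it is exactly that singular part which could make $\RE(g-\tilde g)(0)$ negative even though $\RE(g-\tilde g)^*\ge 0$ a.e. Nothing you wrote rules this out.

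What is actually needed is the computation carried out in the paper's Lemma~\ref{lem_wkw_h_fi_miara}: for \emph{any} $\rho\in\OO(\DD,D)$ with boundary measure $\sigma$, the function $\lambda\mapsto h(\lambda)\bullet(\rho(\lambda)-\rho(0))/\lambda$ belongs to $\mathcal{M}$ with boundary measure $\bar\lambda h(\lambda)\bullet(d\sigma-\RE\rho(0)\,d\LEBT)$. Applying this to both $\varphi$ and $\psi$ shows $g-\tilde g\in\mathcal{M}$ with boundary measure $\bar\lambda h(\lambda)\bullet(d\mu-d\nu)$, and one must then prove this measure is non-negative. That in turn requires upgrading \ref{prop_warunki_wystarczajace_1}$\wedge$\ref{prop_warunki_wystarczajace_2} to the measure condition \eqref{eq_lem_measures}, extending \eqref{eq_lem_measures} from constant $z\in D$ to $\RE D$-valued measurable functions by partitioning $\TT$, and finally passing to the weak-* limit $\RE\psi(r\,\cdot)\,d\LEBT\to d\nu$. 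None of these steps appears in your proposal. By contrast, the paper's construction of the left inverse sidesteps the competitor $\psi$ entirely and uses \ref{prop_warunki_wystarczajace_2} only to guarantee that $\Phi_0(\varphi(0),\cdot)$ has a unique simple zero at the origin, which anchors the winding-number count.
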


By \cite[Lemma 8.2.2]{jarnickipflug}, in the case of a bounded domain $D$, if there exists some $h:\DD\to\CC^n$ of class $H^1$ satisfying \ref{prop_warunki_wystarczajace_1} (with $h^*(\lambda)$ instead of $h(\lambda)$), then $\varphi$ admits a left inverse on $D$.
In our situation, i.e. when $D$ is a taut convex tube domain, one can show (putting $z+isx$ instead of $z$ in \ref{prop_warunki_wystarczajace_1}, with $s\in\RR$ and fixed $x\in\RR^n$, $z\in D$) that such a function $h$ satisfies $\BLHL{}\in\RR^n$ for a.e. $\lambda\in\TT$, and hence it must be of the form $h(\lambda)=\bar a\lambda^2+b\lambda+a$ (see e.g. \cite[Lemma 2]{gentili}).
However, this assumption is no longer sufficient for $\varphi$ to admit a left inverse (and hence to be a complex geodesic for $D$).
For example, take $D=\HHL$ and $\varphi(\lambda)=\frac{\lambda^2+1}{\lambda^2-1}$.
One can easily check that $\varphi$ satisfies \ref{prop_warunki_wystarczajace_1} with $h(\lambda)=\lambda$, but clearly it is not a complex geodesic for $D$ and it does not fulfill \ref{prop_warunki_wystarczajace_2}.

Actually, the proof of Proposition \ref{prop_warunki_wystarczajace} is very similar to the proof of \cite[Lemma 8.2.2]{jarnickipflug}.
The only trouble appears when we need to use a version of maximum principle for harmonic functions: knowing that $u$ is harmonic on $\DD$ and $u^*<0$ a.e. on $\TT$, generally we cannot conclude that $u<0$ in $\DD$ (in particular, \ref{prop_warunki_wystarczajace_2} does not follow from \ref{prop_warunki_wystarczajace_1} applied for $z=\varphi(0)$); something more of $u$ need to be assumed, e.g. that $u$ is bounded from above.
This is the reason for which the condition \ref{prop_warunki_wystarczajace_2} appears.
For the reader's convenience, we show the proof of Proposition \ref{prop_warunki_wystarczajace} in section \ref{sect_appendix}.
Actually, based on the proof of \cite[Lemma 8.2.2]{jarnickipflug} we state a lemma which is somewhat more general than Proposition \ref{prop_warunki_wystarczajace} (it works for every domain in $\CC^n$) and which gives that proposition as an immediate corollary.
The lemma is formulated in a bit different form than the proposition, for reasons explained in Section \ref{sect_appendix}.

Let us remark that if $\RE D$ is bounded, then the condition \ref{prop_warunki_wystarczajace_2} in Proposition \ref{prop_warunki_wystarczajace} can be omitted.
This follows from the fact that $\RE \varphi$ is bounded and hence the maps $\varphi$ and $\lambda\mapsto h(\lambda)\bullet\frac{\varphi(0)-\varphi(\lambda)}{\lambda}$ are of class $H^1$ (with $h$ as in Proposition \ref{prop_warunki_wystarczajace}), so the maximum principle can be applied to deduce \ref{prop_warunki_wystarczajace_2} from \ref{prop_warunki_wystarczajace_1}.

Let us now state necessary conditions for a map $\varphi$ is a complex geodesic:

\begin{PROP}\label{prop_warunki_konieczne}
Let $D\subset\CC^n$ be a taut convex tube domain, let $\varphi:\DD\to D$ be a complex geodesic and let $f:D\to\DD$ be a left inverse for $\varphi$.
Define $$h(\lambda):=\left(\PDER{f}{z_1}(\varphi(\lambda)),\ldots,\PDER{f}{z_n}(\varphi(\lambda))\right),\;\lambda\in\DD.$$
Then:
\begin{enumerate}
\renewcommand{\theenumi}{(\roman{enumi})}
\renewcommand{\labelenumi}{\theenumi}
\item\label{prop_warunki_konieczne_1} $h(\lambda)=\bar a \lambda^2+b\lambda+a$ for some $a\in\CC^n,b\in\RR^n$, and $h\not\equiv 0$,
\item\label{prop_warunki_konieczne_2} $\RE\left[\bar\lambda h(\lambda)\bullet(z-\varphi^*(\lambda))\right]<0$ for all $z\in D$ and a.e. $\lambda\in\TT$,
\item\label{prop_warunki_konieczne_3} $\RE\left[h(\lambda)\bullet\frac{\varphi(0)-\varphi(\lambda)}{\lambda}\right]<0$ for every $\lambda\in\DD_*$.
\end{enumerate}
\end{PROP}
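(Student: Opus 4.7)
The strategy is to follow the classical derivation of necessary conditions for complex geodesics in bounded convex domains (cf.\ \cite[Lemma 8.2.1]{jarnickipflug}) and adapt the final step to the tube symmetry of $D$. Differentiating the identity $f\circ\varphi=\mathrm{id}_\DD$ yields $h(\lambda)\bullet\varphi'(\lambda)=1$ for every $\lambda\in\DD$, so $h\not\equiv 0$, which already gives one half of \ref{prop_warunki_konieczne_1}. All remaining assertions will be extracted from a single one-parameter construction: for each fixed $z\in D$, convexity of $D$ makes $\varphi_s(\lambda):=\varphi(\lambda)+s(z-\varphi(\lambda))$ a $D$-valued holomorphic map on $\DD$ for every $s\in[0,1]$, hence $F_s:=f\circ\varphi_s:\DD\to\DD$ is a holomorphic self-map of the disc with $F_0=\mathrm{id}_\DD$ and $\partial_s F_s|_{s=0}(\lambda)=h(\lambda)\bullet(z-\varphi(\lambda))$.

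For \ref{prop_warunki_konieczne_3} I would specialize to $z=\varphi(0)$, so that $F_s(0)=0$; the Schwarz lemma then yields $|F_s(\lambda)|^2\leq|\lambda|^2$, and expanding in $s$ and letting $s\to 0^+$ produces $\RE[\bar\lambda\,h(\lambda)\bullet(\varphi(0)-\varphi(\lambda))]\leq 0$ on $\DD$, equivalently $\RE[h(\lambda)\bullet(\varphi(0)-\varphi(\lambda))/\lambda]\leq 0$ on $\DD_*$. The function $\lambda\mapsto h(\lambda)\bullet(\varphi(0)-\varphi(\lambda))/\lambda$ extends holomorphically across $0$ with value $-(f\circ\varphi)'(0)=-1$, so its real part is harmonic on $\DD$, non-positive, and equals $-1$ at the origin; the maximum principle then upgrades the inequality to strict on all of $\DD_*$. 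For \ref{prop_warunki_konieczne_2}, using general $z\in D$ in the same family together with $|F_s(\lambda)|^2\leq 1$ gives $2s\,\RE[\bar\lambda h(\lambda)\bullet(z-\varphi(\lambda))]\leq 1-|\lambda|^2+O(s^2)$; evaluating at $\lambda=r\zeta$ and letting first $r\to 1^-$ and then $s\to 0^+$, applied on a countable dense set of test points $z\in D$ so that the exceptional null set in $\TT$ is universal, yields the non-strict version $\RE[\bar\zeta h^*(\zeta)\bullet(z-\varphi^*(\zeta))]\leq 0$ for every $z\in D$ and a.e.\ $\zeta\in\TT$. Strict inequality follows because $D$ is open and $\bar\zeta h^*(\zeta)\neq 0$ a.e.: any equality at some $z$ would be contradicted by a small perturbation $z+\varepsilon v\in D$ chosen to make the value strictly positive.

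For the polynomial form \ref{prop_warunki_konieczne_1}, I would exploit the tube structure: since $\varphi(0)+itv\in D$ for every $v\in\RR^n$ and $t\in\RR$, substituting $z=\varphi(0)+itv$ into \ref{prop_warunki_konieczne_2} produces a quantity which is affine in $t$ and bounded above by $0$ for all $t\in\RR$, so its linear coefficient $\IM[\bar\lambda h^*(\lambda)\bullet v]$ must vanish for every $v\in\RR^n$ and a.e.\ $\lambda\in\TT$. Thus $\bar\lambda h^*(\lambda)\in\RR^n$ almost everywhere on $\TT$, and the Schwarz-reflection rigidity of \cite[Lemma 2]{gentili} then forces $h(\lambda)=\bar a\lambda^2+b\lambda+a$ with some $a\in\CC^n$ and $b\in\RR^n$. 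The main technical obstacle is to ensure that $h=\nabla f\circ\varphi$ has enough boundary regularity (Nevanlinna-type, with non-tangential limits a.e.) both for the radial-limit step in \ref{prop_warunki_konieczne_2} and for Gentili's lemma to apply; this is not automatic since $\nabla f$ can be unbounded on the unbounded tube $D$, but should follow from Cauchy estimates bounding $|\nabla f(w)|$ in terms of $\mathrm{dist}(w,\partial D)^{-1}$ combined with the standard control of $\mathrm{dist}(\varphi(\lambda),\partial D)$ near $\TT$ for holomorphic maps into a taut convex tube.
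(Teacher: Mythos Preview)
Your overall architecture is the same as the paper's: the one-parameter family $F_s=f\circ\varphi_s$, the Schwarz-lemma argument for \ref{prop_warunki_konieczne_3}, the tube substitution $z=\varphi(0)+itv$ for \ref{prop_warunki_konieczne_1}, and the openness argument for the strict inequality in \ref{prop_warunki_konieczne_2} are all exactly what the paper does. The part that is handled differently, and where your version has a genuine gap, is the passage to the boundary in \ref{prop_warunki_konieczne_2} and the regularity of $h$ that underlies both \ref{prop_warunki_konieczne_1} and \ref{prop_warunki_konieczne_2}.

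In your derivation of \ref{prop_warunki_konieczne_2} you write $2s\,\RE[\bar\lambda h(\lambda)\bullet(z-\varphi(\lambda))]\leq 1-|\lambda|^2+O(s^2)$ and propose to let $r\to 1^-$ first, then $s\to 0^+$. But the $O(s^2)$ term contains $|h(\lambda)\bullet(z-\varphi(\lambda))|^2$ and second derivatives of $f$ along $\varphi$, all of which may blow up as $|\lambda|\to 1$ when $\RE D$ is unbounded; likewise the left-hand side has no limit unless you already know $h^*$ exists. Your argument for \ref{prop_warunki_konieczne_1} then invokes $h^*$ and Gentili's lemma, so the whole chain rests on boundary regularity of $h$ that has not been established. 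The Cauchy-estimate fix you sketch gives only $|h(\lambda)|\lesssim\mathrm{dist}(\varphi(\lambda),\partial D)^{-1}$, and for a geodesic into, say, $\HHL$ this bound is of order $|1-\lambda|^2/(1-|\lambda|^2)$, which is not enough for $H^p$ or even Nevanlinna class.

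The paper avoids this circularity by replacing the crude bound $|F_s|\leq 1$ with the sharper Schwarz--Pick-type inequality of \cite[Lemma~1.2.4]{abate}, namely $|f_{z,t}(\lambda)|-|\lambda|\leq \tfrac{2|f_{z,t}(0)|}{1+|f_{z,t}(0)|}(1-|\lambda|)$. Dividing by $t$ and letting $t\to 0$ (all at interior points) yields the \emph{interior} estimate
\[
\RE\!\left[\bar\lambda h(\lambda)\bullet(z-\varphi(\lambda))\right]\leq 2(1-|\lambda|)\,|h(0)\bullet(z-\varphi(0))|,\qquad \lambda\in\DD,\ z\in D.
\]
Plugging in the tube directions $z=\varphi(0)+ise_j$ here (not on $\TT$) gives $|\IM(\bar\lambda h_j(\lambda))|\leq 2(1-|\lambda|)|h_j(0)|$ on all of $\DD$, which immediately forces $h_j\in H^1$; only after this does the paper pass to non-tangential limits to obtain \ref{prop_warunki_konieczne_2} and the polynomial form \ref{prop_warunki_konieczne_1}. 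So the missing ingredient in your proposal is precisely this quantitative interior bound with decay $(1-|\lambda|)$, which manufactures the $H^1$ regularity rather than assuming it.
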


\noindent
Propositions \ref{prop_warunki_wystarczajace} and \ref{prop_warunki_konieczne} let us state the following characterisation of geodesics:

\begin{THE}\label{th_wkw_radial_limits}
Let $D\subset\CC^n$ be a taut convex tube domain and let $\varphi:\DD\to D$ be a holomorphic map.
Then $\varphi$ is a complex geodesic for $D$ iff there exists a mapping $h:\CC\to\CC^n$ of the form $\bar a\lambda^2+b\lambda+a$ with some $a\in\CC^n, b\in\RR^n$, such that:
\begin{enumerate}
\renewcommand{\theenumi}{(\roman{enumi})}
\renewcommand{\labelenumi}{\theenumi}
\item\label{th_wkw_radial_limits_1} $\RE\left[\bar\lambda h(\lambda)\bullet(z-\varphi^*(\lambda))\right]<0$ for all $z\in D$ and a.e. $\lambda\in\TT$,
\item\label{th_wkw_radial_limits_2} $\RE\left[h(\lambda)\bullet\frac{\varphi(0)-\varphi(\lambda)}{\lambda}\right]<0$ for every $\lambda\in\DD_*$.
\end{enumerate}
\end{THE}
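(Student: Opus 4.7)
The plan is to prove the theorem as a direct combination of Propositions \ref{prop_warunki_wystarczajace} and \ref{prop_warunki_konieczne}, which together supply the two implications; no additional ingredients are needed once both propositions are available.

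For the sufficiency direction, suppose that a mapping $h:\CC\to\CC^n$ of the form $\bar a\lambda^2+b\lambda+a$ exists and satisfies \ref{th_wkw_radial_limits_1} and \ref{th_wkw_radial_limits_2}. These are literally the hypotheses \ref{prop_warunki_wystarczajace_1} and \ref{prop_warunki_wystarczajace_2} of Proposition \ref{prop_warunki_wystarczajace}, so that proposition applies and yields immediately that $\varphi$ is a complex geodesic for $D$.

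For the necessity direction, assume that $\varphi$ is a complex geodesic for $D$. By the definition recalled in Section \ref{sect_preliminaries}, $\varphi$ then admits a left inverse $f:D\to\DD$. Define $h$ via the gradient of $f$ along $\varphi$, i.e.\ set $h(\lambda):=\left(\PDER{f}{z_1}(\varphi(\lambda)),\ldots,\PDER{f}{z_n}(\varphi(\lambda))\right)$ for $\lambda\in\DD$. Proposition \ref{prop_warunki_konieczne} applied to this $f$ tells us that $h$ extends to a map on $\CC$ of the desired form $\bar a\lambda^2+b\lambda+a$, with $a\in\CC^n$ and $b\in\RR^n$, and that this $h$ satisfies conditions \ref{th_wkw_radial_limits_1} and \ref{th_wkw_radial_limits_2}. (The additional assertion $h\not\equiv 0$ provided by Proposition \ref{prop_warunki_konieczne} is stronger than what the theorem requires, although in fact it is forced automatically, since if $h\equiv 0$ then \ref{th_wkw_radial_limits_1} would read $0<0$.)

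The actual substance of the theorem therefore sits in the two propositions rather than in the synthesis itself, so I do not anticipate any obstacle in this step. The only conceptual point is the extension from $h$ defined on $\DD$ via $f$ to $h$ defined on all of $\CC$ as required by the theorem statement, but Proposition \ref{prop_warunki_konieczne} delivers the explicit polynomial representative, making the extension canonical and the matching of hypotheses between the two propositions and the theorem automatic.
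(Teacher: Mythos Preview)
Your proposal is correct and matches the paper's own approach exactly: the paper presents Theorem \ref{th_wkw_radial_limits} as an immediate consequence of Propositions \ref{prop_warunki_wystarczajace} and \ref{prop_warunki_konieczne}, with no further argument given. Your observation that the polynomial form of $h$ from Proposition \ref{prop_warunki_konieczne} supplies the extension to $\CC$ is the only point needing mention, and you handle it correctly.
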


\begin{proof}[Proof of Proposition \ref{prop_warunki_konieczne}]
Since $f(\varphi(\lambda))=\lambda$, we have
\begin{equation}\label{eq_pwk_1145}
h(\lambda)\bullet\varphi'(\lambda)=1,\;\lambda\in\DD.
\end{equation}
In particular, $h\not\equiv 0$.

Denote $$f_{z,t}(\lambda):=f((1-t)\varphi(\lambda)+tz),\;\lambda\in\DD,t\in[0,1],z\in D.$$
There is $f_{z,0}(\lambda)=\lambda$.
We have
\begin{equation*}
\left.\DDER{|f_{z,t}(\lambda)|^2}{t}\right|_{t=0}=2\,\RE\left[\BLHL{}\bullet(z-\varphi(\lambda))\right].
\end{equation*}
On the other hand, as $f_{z,t}\in\OO(\DD,\DD)$, by \cite[Lemma 1.2.4]{abate} we have the inequality $$|f_{z,t}(\lambda)|-|\lambda|\leq\frac{2|f_{z,t}(0)|}{1+|f_{z,t}(0)|}(1-|\lambda|).$$
Therefore, $$\frac{|f_{z,t}(\lambda)|^2-|\lambda|^2}{t} \leq 2\,\frac{|f_{z,t}(\lambda)|-|\lambda|}{t} \leq \frac{4|\frac1t f_{z,t}(0)|}{1+|f_{z,t}(0)|}(1-|\lambda|),$$
so $$\left.\DDER{|f_{z,t}(\lambda)|^2}{t}\right|_{t=0} \leq 4(1-|\lambda|)\left|\left.\DDER{f_{z,t}(0)}{t}\right|_{t=0}\right| \leq 4(1-|\lambda|)\,|h(0)\bullet(z-\varphi(0))|.$$
In summary, we obtain
\begin{equation}\label{eq_pwk_1234}
\RE\left[\BLHL{}\bullet(z-\varphi(\lambda))\right] \leq 2(1-|\lambda|)\,|h(0)\bullet(z-\varphi(0))|,\;\lambda\in\DD,z\in D.
\end{equation}

Putting $z=\varphi(0)$ we obtain the weak inequality in \ref{prop_warunki_konieczne_3}.
The strong one follows from the maximum principle for the harmonic function $$\DD\ni\lambda\mapsto\RE\left[h(\lambda)\bullet\frac{\varphi(0)-\varphi(\lambda)}{\lambda}\right]$$
- it is non-constant, because its value at $\lambda=0$ equals to $-\RE\left[h(0)\bullet\varphi'(0)\right]=-1$, by (\ref{eq_pwk_1145}).

Putting $z=\varphi(0)+ise_j$ in (\ref{eq_pwk_1234}), where $j\in\lbrace 1,\ldots,n\rbrace$ and $s\in\RR$, we get $$\RE\left[\BLHL{}\bullet(\varphi(0)-\varphi(\lambda))\right] \leq \IM(\bar\lambda h_j(\lambda))s + 2(1-|\lambda|)\,|h_{j}(0)||s|.$$
Hence, for fixed $\lambda\in\DD$ the function of variable $s$ on the right side is bounded from below.
This implies
\begin{equation}\label{eq_pwk_1133}
|\IM(\bar\lambda h_j(\lambda))| \leq 2(1-|\lambda|)\,|h_j(0)|,\;\lambda\in\DD.
\end{equation}
Writing $h_j(\lambda)= h_j(0)+\lambda g_j(\lambda)$ we obtain $$|\lambda|^2|\IM g_j(\lambda)|-|\IM(\bar\lambda h_j(0))|\leq 2\,(1-|\lambda|)|h_j(0)|.$$
By the maximum principle, $\IM g_j$ is bounded, so $g_j$ and $h_j$ are of class $H^1$.
Tending with $\lambda$ non-tangentially to $\TT$ in (\ref{eq_pwk_1133}) we obtain $\IM (\bar\lambda h_j^*(\lambda))=0$ a.e. on $\TT$.
This implies $$\IM(g_j^*(\lambda)-\overline{h_j(0)}\lambda) = \IM(\bar\lambda h_j^*(\lambda)) = 0\;\text{ for a.e. }\lambda\in\TT,$$
so $g_j(\lambda)-\overline{h_j(0)}\lambda$ is equal to some real constant $b_j$, as $g_j$ is of class $H^1$.
We get \ref{prop_warunki_konieczne_1} (and we can extend $h$ to the whole $\CC$).

Tending with $\lambda$ non-tangentially to $\TT$ in (\ref{eq_pwk_1234}) we obtain the weak inequality in \ref{prop_warunki_konieczne_2}.
The strong inequality follows from the fact that for a.e. $\lambda\in\TT$ the mapping $$D\ni z\mapsto \RE\left[\bar\lambda h(\lambda)\bullet(z-\varphi^*(\lambda))\right]\in\RR$$ is affine (over $\RR$), non-constant, and hence open.
\end{proof}

Note that we can obtain the statement \ref{prop_warunki_konieczne_1} in Proposition \ref{prop_warunki_konieczne} immediately, using more general fact - see \cite[Theorem 3]{edigarianzwonek}.

The fact that the condition \ref{th_wkw_radial_limits_2} from Theorem \ref{th_wkw_radial_limits} is not a 'boundary' condition makes it not very useful when we want to compute complex geodesics for a given tube domain.
On the other hand, the condition with radial limits seems to be too weak in the case of unbounded base of $D$, because radial limits of a map generally do not give us much information about it.
Fortunately, making use of bounadry measures we can replace these two conditions by another one, expressed in the language of the measure theory and referring only to boundary properties of a mapping (Theorem \ref{th_wkw_miara}).
These new condition turns out to be useful for calculating complex geodesics for some convex tubes with unbounded base.

Let us note that in the case when $\RE D$ is bounded, the boundary measure of a map $\varphi$ is just $\RE\varphi^*\DLEBT$ and we have the Poisson formula for $\varphi$ with its radial limits.
Therefore, for such $D$ the conditions with radial limits seems to be sufficient for our purposes (see e.g. Example \ref{ex_kolo}).

Theorem \ref{th_wkw_miara} is a consequence of Theorem \ref{th_wkw_radial_limits} and the following lemma:

\begin{LEM}\label{lem_wkw_h_fi_miara}
Let $D\subset\CC^n$ be a taut convex tube, let $\varphi:\DD\to D$ be a holomorphic map with the boundary measure $\mu$, and let $h(\lambda)=\bar a\lambda^2+b\lambda+a$, $\lambda\in\DD$, for some $a\in\CC^n$, $b\in\RR^n$, with $h\not\equiv 0$.
Then
\begin{align*}\label{eq_lem_measures}\tag{m}
\text{the measure }\BLHL{}\bullet(\RE z\,\DLEBT(\lambda)-d\mu(\lambda))\text{ is negative for every }z\in D
\end{align*}
iff the following two conditions holds:
\begin{enumerate}
\renewcommand{\theenumi}{(\roman{enumi})}
\renewcommand{\labelenumi}{\theenumi}
\item\label{th_wkw_miara_1} $\RE\left[\bar\lambda h(\lambda)\bullet(z-\varphi^*(\lambda))\right]<0$ for all $z\in D$ and a.e. $\lambda\in\TT$,
\item\label{th_wkw_miara_2} $\RE\left[h(\lambda)\bullet\frac{\varphi(0)-\varphi(\lambda)}{\lambda}\right]<0$ for every $\lambda\in\DD_*$.
\end{enumerate}
\end{LEM}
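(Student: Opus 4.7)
My plan is to reduce the equivalence to a single key computation: the identification of the boundary measure of an auxiliary holomorphic function. Write $\tilde h(\lambda) := \bar\lambda h(\lambda)$; on $\TT$ this equals $\bar a\lambda + b + a\bar\lambda$, which is real-valued and extends to $\CC$ as an affine, $\RR^n$-valued harmonic function. Since each component is a trigonometric polynomial of degree at most one and $h\not\equiv 0$, we have $\tilde h(\lambda)\neq 0$ for a.e.\ $\lambda\in\TT$. Set $\nu_z := \tilde h\bullet(\RE z\,\DLEBT - d\mu)$, so condition (\ref{eq_lem_measures}) is $\nu_z\leq 0$ for every $z\in D$. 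Via the Lebesgue decomposition $d\mu = \RE\varphi^*\,\DLEBT + d\mu_s$ with $\mu_s\perp\LEBT$ (obtained from Fatou applied to the harmonic map $\RE\varphi$), each $\nu_z$ splits as an absolutely continuous part $\tilde h(\lambda)\bullet(\RE z - \RE\varphi^*(\lambda))\,\DLEBT(\lambda)$ plus a singular part $-\tilde h\bullet d\mu_s$; crucially, the singular part does not depend on $z$.

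The heart of the proof is to compute the boundary measure of the holomorphic function
\[
-H(\lambda) := h(\lambda)\bullet\frac{\varphi(\lambda)-\varphi(0)}{\lambda} = \frac{1}{\pi}\int_\TT\frac{h(\lambda)\bullet d\mu(\zeta)}{\zeta-\lambda},
\]
where the second equality follows from the Poisson formula for $\varphi$. Using the polynomial division $h(\lambda) = h(\zeta) + (\lambda-\zeta)(\bar a(\lambda+\zeta)+b)$, I would split the integrand. The polynomial piece contributes an affine expression $C_1\lambda + C_0$ whose coefficients reduce via the moment identities $\int_\TT d\mu = 2\pi\RE\varphi(0)$ and $\int_\TT\zeta\,d\mu = \pi\overline{\varphi'(0)}$. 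The remaining piece $h(\zeta)/(\zeta-\lambda)$ uses the boundary identity $h(\zeta) = \zeta\tilde h(\zeta)$ on $\TT$, so that $h(\zeta)\bullet d\mu(\zeta) = \zeta\,d\tau(\zeta)$ for the real measure $\tau := \tilde h\bullet\mu$; the elementary identity $\zeta/(\zeta-\lambda) = 1 + \lambda/(\zeta-\lambda)$ then converts the Cauchy integral into $f_\tau(\lambda)$ plus a constant. Assembling the pieces gives $-H(\lambda) = c_1\lambda + c_0 + f_\tau(\lambda)$, and a short bookkeeping shows $\RE(c_1\zeta) + \RE c_0 = -\tilde h(\zeta)\bullet\RE\varphi(0)$ on $\TT$. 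Consequently the boundary measure of $-H$ equals $\tilde h\bullet d\mu - \tilde h(\zeta)\bullet\RE\varphi(0)\,\DLEBT(\zeta) = -\nu_{\varphi(0)}$.

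With this key identity, both directions fall out quickly. For \ref{th_wkw_miara_1} and \ref{th_wkw_miara_2} $\Rightarrow$ (\ref{eq_lem_measures}): \ref{th_wkw_miara_1} gives the a.c.\ part of every $\nu_z$ is non-positive; \ref{th_wkw_miara_2} yields $\RE(-H)>0$ on $\DD_*$, hence $\RE(-H)\geq 0$ on $\DD$, so by the Herglotz criterion cited in Section \ref{sect_preliminaries} the boundary measure $-\nu_{\varphi(0)}$ is non-negative, the singular part $\tilde h\bullet d\mu_s\geq 0$, and therefore the singular part of every $\nu_z$ is $\leq 0$. For (\ref{eq_lem_measures}) $\Rightarrow$ \ref{th_wkw_miara_1} and \ref{th_wkw_miara_2}: a countable-dense-in-$D$ argument combined with continuity in $z$, openness of $\RE D$, and $\tilde h\neq 0$ a.e.\ upgrades the a.c.\ non-positivity to a strict $<$ for every $z\in D$, giving \ref{th_wkw_miara_1}; non-positivity of $\nu_{\varphi(0)}$ makes the boundary measure of $-H$ non-negative, and it is non-zero because otherwise $\tilde h(\lambda)\bullet(\RE z - \RE\varphi(0))\leq 0$ a.e.\ for every $z\in D$, which since $\RE\varphi(0)$ lies in the open set $\RE D$ forces $\tilde h\equiv 0$ a.e., contradicting $h\not\equiv 0$. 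The maximum principle for positive harmonic functions then yields $\RE(-H) > 0$ on $\DD$, which is \ref{th_wkw_miara_2}.

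The principal obstacle is the explicit computation identifying the boundary measure of $-H$ with $-\nu_{\varphi(0)}$; it requires the polynomial-division trick together with the substitution $h(\zeta)=\zeta\tilde h(\zeta)$ on $\TT$ and the precise identification of the low-order Fourier coefficients of $\mu$ with $\RE\varphi(0)$ and $\varphi'(0)$. Once this identity is in place, the rest reduces to the Lebesgue decomposition and Herglotz's theorem.
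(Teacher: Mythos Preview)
Your proposal is correct, and the key identity you compute --- that $-H=\psi_{\varphi(0)}$ has boundary measure $-\nu_{\varphi(0)}$ --- is precisely the heart of the matter. Your route, however, differs from the paper's. The paper introduces for \emph{each} $z\in D$ the holomorphic function
\[
\psi_z(\lambda)=\frac{\varphi(0)-\varphi(\lambda)}{\lambda}\bullet h(\lambda)+\frac{h(\lambda)-h(0)}{\lambda}\bullet(z-\varphi(0))+\lambda\,\overline{h(0)\bullet(z-\varphi(0))}
\]
(so your $H$ is exactly $\psi_{\varphi(0)}$, and the extra terms are polynomial), checks the elementary boundary identity $\RE\psi_z^*(\lambda)=\RE[\bar\lambda h(\lambda)\bullet(z-\varphi^*(\lambda))]$, and then shows in one stroke that $\psi_z\in\mathcal{M}$ with boundary measure $\nu_z$. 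For this last step it avoids your Cauchy--integral/moment computation entirely: it simply writes $\nu_z$ as a sum of three pieces and observes that the first is the weak-* limit of $\RE\big[\frac{\varphi(0)-\varphi(r\lambda)}{\lambda}\bullet h(\lambda)\big]\DLEBT$ as $r\to 1^-$ (this uses nothing more than $\RE\varphi(r\cdot)\DLEBT\to\mu$ weakly-*), while the other two are absolutely continuous with polynomial density and match the remaining polynomial terms of $\psi_z$ by inspection. Once the boundary measure of $\psi_z$ is identified with $\nu_z$, the equivalence is immediate from the Herglotz criterion, with no Lebesgue decomposition and no separate handling of the singular part. Your approach trades the uniform treatment over $z$ for a single detailed computation at $z=\varphi(0)$, then recovers general $z$ through the observation that the singular part of $\nu_z$ is $z$-independent; this is slightly longer but perfectly sound, and the polynomial-division trick together with the moment identities $\int_\TT d\mu=2\pi\RE\varphi(0)$, $\int_\TT\zeta\,d\mu=\pi\overline{\varphi'(0)}$ is a nice alternative to the weak-* argument.
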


All measures in \eqref{eq_lem_measures} are regular and real.
Let us also note that in view of this lemma, the function $h$ in Theorem \ref{th_wkw_miara} is the same $h$ as in Theorem \ref{th_wkw_radial_limits} - we shall use this fact in Section \ref{sect_examples}.

\begin{proof}[Proof of Lemma \ref{lem_wkw_h_fi_miara}]
We start with showing that the condition \ref{th_wkw_miara_1} $\wedge$ \ref{th_wkw_miara_2} is equivalent to the following:
\begin{equation}\label{eq_lem_wkw_psi_1111}
\RE\psi_z(\lambda)\leq 0\;\text{for all }\lambda\in\DD,z\in D,
\end{equation}
where $\psi_z:\DD\to\CC$ is the holomorphic function defined as
\begin{equation}\label{eq_lem_wkw_psi_2277}
\psi_z(\lambda) = \frac{\varphi(0)-\varphi(\lambda)}{\lambda}\bullet h(\lambda)+\frac{h(\lambda)-h(0)}{\lambda}\bullet(z-\varphi(0))+\lambda\,\overline{h(0)\bullet(z-\varphi(0))}
\end{equation}
for $z\in D$, $\lambda\in\DD_*$ (and extended holomorphically through $0$).
Indeed, one can check that
\begin{equation}\label{eq_lem_wkw_880123}
\RE\psi^*_z(\lambda)=\RE\left[\bar\lambda h(\lambda)\bullet(z-\varphi^*(\lambda))\right]\text{ for all }z\in D\text{ and a.e. }\lambda\in\TT.
\end{equation}
Now, \ref{th_wkw_miara_1} $\wedge$ \ref{th_wkw_miara_2} implies that $\RE\psi_z$ is bounded from above and $\RE\psi_z^*< 0$ a.e. on $\TT$, so the maximum principle easily gives \eqref{eq_lem_wkw_psi_1111}.
On the other hand, \eqref{eq_lem_wkw_psi_1111} let us derive the weak inequalities in \ref{th_wkw_miara_1} and \ref{th_wkw_miara_2}.
The strong inequality in \ref{th_wkw_miara_1} follows from the fact that the map $z\mapsto\RE\left[\bar\lambda h(\lambda)\bullet(z-\varphi^*(\lambda))\right]$ is open for a.e. $\lambda\in\TT$ (as $h\not\equiv 0$), and the strong inequality in \ref{th_wkw_miara_2} is a consequence of the maximum principle, because by \ref{th_wkw_miara_1} with $z=\varphi(0)$ the function $\lambda\mapsto\RE\left[h(\lambda)\bullet\frac{\varphi(0)-\varphi(\lambda)}{\lambda}\right]$ is not identically equal to $0$.

Let $\nu_z$ denote the measure in the condition \eqref{eq_lem_measures}.
To finish the proof, it suffices to show that the conditions \eqref{eq_lem_wkw_psi_1111} and \eqref{eq_lem_measures} are equivalent, and for this it is enough to prove that $\psi_z\in\mathcal{M}$ and $\nu_z$ is the boundary measure of $\psi_z$.

We claim that the Poisson formula holds for $\RE\psi_z$ and $\nu_z$, i.e.
\begin{equation}\label{eq_lem_wkw_2157}
\RE\psi_z(\lambda) = \frac{1}{2\pi}\int_{\TT}\frac{1-|\lambda|^2}{|\zeta-\lambda|^2}d\nu_z(\zeta),\;\lambda\in\DD,z\in D
\end{equation}
(this shall finish the proof, in view of the definition of $\mathcal{M}$).
Fix $z\in D$.
Write
\begin{eqnarray*}
\nu_z = \BLHL{} \bullet (\RE z\,\DLEBT(\lambda)-d\mu(\lambda)) &=& \BLHL{}\bullet\left(\RE\varphi(0)\DLEBT(\lambda)-d\mu(\lambda)\right)\\
&+& \RE\left[\bar\lambda\left(h(\lambda)-h(0)\right)\bullet\left(z-\varphi(0)\right)\right]\DLEBT(\lambda)\\
&+& \RE\left[\bar\lambda h(0)\bullet\left(z-\varphi(0)\right)\right]\DLEBT(\lambda)
\end{eqnarray*}
(remember, that $\bar\lambda h(\lambda)\in\RR$ for $\lambda\in\TT$).
The function $\RE\psi_z$ is clearly equal to the sum of the following three terms: $$\RE\left[\frac{\varphi(0)-\varphi(\lambda)}{\lambda}\bullet h(\lambda)\right],\; \RE\left[\frac{h(\lambda)-h(0)}{\lambda}\bullet(z-\varphi(0))\right],\; \RE\left[\lambda\,\overline{h(0)\bullet(z-\varphi(0))}\right].$$
We have
$$\BLHL{}\bullet\left(\RE\varphi(0)\DLEBT(\lambda)-d\mu(\lambda)\right) = \lim_{r\to 1^-} \RE\left[\frac{\varphi(0)-\varphi(r\lambda)}{\lambda}\bullet h(\lambda)\right]\DLEBT(\lambda)$$
(weak-* limit), so the Poisson formula with the measure on the left hand side gives the first term.
Next,
$$\RE\left[\bar\lambda\left(h(\lambda)-h(0)\right)\bullet\left(z-\varphi(0)\right)\right]\DLEBT(\lambda) = \RE\left[\frac{h(\lambda)-h(0)}{\lambda}\bullet\left(z-\varphi(0)\right)\right]\DLEBT(\lambda),$$ so here the Poisson formula gives the second term.
Finally, the formula for the measure $\RE\left[\bar\lambda h(0)\bullet\left(z-\varphi(0)\right)\right]\DLEBT(\lambda)$ clearly gives the third term.
In summary, we get \eqref{eq_lem_wkw_2157}, what finishes the proof of the Lemma.
\end{proof}

\section{Calculating complex geodesics}\label{sect_examples}

In this section we focus on calculating complex geodesics in convex tubes in $\CC^2$ covering finite intersections of Reinhardt domains of the form $$\lbrace (z_1,z_2)\in\DD^2: 0< |z_1|^p |z_2|^q < \alpha\rbrace$$ with some $p,q>0$, $\alpha\in (0,1)$ (Example \ref{ex_klis_domain}).
For this, we state two lemmas which partially describe boundary measures of geodesics in some special situations (Lemmas \ref{lemat_odcinki} and \ref{lemat_kanty}) and which are afterwards applied to calculate all complex geodesics in Example \ref{ex_klis_domain}.
 
Before we start analysing the examples, let us make a few useful remarks; below we assume that $D\subset\CC^n$ is a taut convex tube domain.

If $\varphi:\DD\to D$ is a complex geodesic and $\lambda\in\TT$ is such that $\BLHL{}\neq 0$ and the inequality $\RE\left[\BLHL{}\bullet(z-\varphi^*(\lambda))\right]<0$ holds for all $z\in D$, then $\varphi^*(\lambda)\in\partial D$ and the vector $\BLHL{}$ is outward from $D$ at $\varphi^*(\lambda)$ (and hence it is outward from $\RE D$ at $\RE \varphi^*(\lambda)$, as $\BLHL{}\in\RR^n$).
This observation is helpful in deriving some information about $h$ and $\varphi$, or even in deriving a formula for $\varphi$ in the case of bounded base of $D$ - similarly as in Example \ref{ex_kolo}; however, it is not sufficient if the base of $D$ is unbounded.

If $\varphi$ is a complex geodesic for $D$, then - by Lemma \ref{lem_wkw_h_fi_miara} - the function $h$ from Theorem \ref{th_wkw_miara} satisfies the conclusion of Theorem \ref{th_wkw_radial_limits}, and vice versa
(what is more, $h$ may be chosen as in Proposition \ref{prop_warunki_konieczne}).
In particular, given a map $h$ as in Theorem \ref{th_wkw_miara} we can apply for it the conclusions made in previous paragraph.

Let us make the following simple observation: given a finite positive measure $\nu$ on $\TT$, a non-negative continuous function $u$ on $\TT$ with $u^{-1}(\lbrace 0\rbrace)=\lbrace\lambda_1,\ldots,\lambda_m\rbrace$, if $ud\nu$ is a null measure, then $\nu=\sum_{j=1}^m\alpha_j\delta_{\lambda_j}$ for some constants $\alpha_1,\ldots,\alpha_m\geq 0$.

Recall that by \cite[Lemma 8.4.6]{jarnickipflug}, if $h\in\OO(\DD,\CC)$ is of class $H^1$ and such that $\bar\lambda h^*(\lambda)>0$ for a.e. $\lambda\in\TT$, then $h$ is of the form $c(\lambda-d)(1-\bar d\lambda)$ with some $d\in\CDD$, $c>0$.
In particular, such a function $h$ has at most one zero on $\TT$ (counting without multiplicities).
By the observation above, if $\nu$ is a finite negative measure on $\TT$ such that the measure $\BLHL{}d\nu(\lambda)$ is null, then $\nu=\alpha\delta_{\lambda_0}$ for some $\alpha\leq 0$, $\lambda_0\in\TT$, with $\alpha h(\lambda_0)=0$ (we take $\lambda_0=d$ if $d\in\TT$, otherwise $\nu$ is null and we put $\alpha=0$ with an arbitrary $\lambda_0$).
We shall quite often use this fact.

Let as also note that if for some $p,v\in\RR^n$ the inequality $\langle \RE z-p,v\rangle<0$ holds for all $z\in D$ and $\varphi:\DD\to D$ is a holomorphic map with the boundary measure $\mu$, then a similar inequality holds for measures: $\langle d\mu-p\,d\LEBT,v\rangle\leq 0$.
This is an immediate consequence of the fact that this measure is equal to the weak-* limit of the negative measures $\langle\RE\varphi(r\cdot)-p,v\rangle\,d\LEBT$, when $r\to 1^-$.
In particular, if $\RE D\subset (-\infty,0)^n$, then $\mu_1,\ldots,\mu_n\leq 0$.

We start with two simple examples: $D=\HHL^n$ and $D=\SS$ (the second one is further needed).
Afterwards, we show two lemmas and we move to Example \ref{ex_klis_domain}.

\begin{EX}\label{ex_iloczyn_lewych_polplaszczyzn}
A map $\varphi\in\mathcal{M}^n$ with the boundary measure $\mu=(\mu_1,\ldots,\mu_n)$ is a complex geodesic for the domain $\HHL^n$ iff $$\mu_{j_0}=\alpha\delta_{\lambda_0}$$ for some $j_0\in\lbrace 1,\ldots,n\rbrace$, $\alpha<0$, $\lambda_0\in\TT$.

Indeed, assume that $\varphi$ is a geodesic for $\HHL^n$ and let $h=(h_1,\ldots,h_n)$ be as in Theorem \ref{th_wkw_miara}, i.e. $h\not\equiv 0$ and $$\BLHL{}\bullet(\RE z\,\DLEBT(\lambda)-d\mu(\lambda))\leq 0,\;z\in\HHL^n.$$
Tending with $z$ to $0$ we obtain $\BLHL{}\bullet d\mu(\lambda)\geq 0$.
On the other hand, $\BLHL{j}\geq 0$ on $\TT$, because $h$ in continuous on $\TT$ and $\BLHL{}$ is outward from $\HHL^n$ for a.e. $\lambda\in\TT$, and $\mu_j\leq 0$, as $\RE\varphi_j<0$ on $\DD$.
This implies $\BLHL{}\bullet d\mu(\lambda)\leq 0$, and finally: $$\BLHL{1}d\mu_1(\lambda)+\ldots+\BLHL{n}d\mu_n(\lambda)=0.$$
Since all terms of the above sum are negative measures, we have $\BLHL{j}d\mu_j(\lambda)=0$ for every $j=1,\ldots,n$.
There exists $j_0$ such that $h_{j_0}\not\equiv 0$, and as $\mu_j$ is non-null for every $j$ (because $\RE\varphi_j\not\equiv 0$), the function $h_{j_0}$ must admit a root $\lambda_0$ on $\TT$.
Hence, we have $$\mu_{j_0}=\alpha\delta_{\lambda_0}$$ for some $\alpha<0$.
In view of \eqref{eq_poisson_formula}, the map $\varphi_{j_0}$ is given by the formula $$\varphi_{j_0}(\lambda)=\frac{\alpha}{2\pi}\,\frac{\lambda_0+\lambda}{\lambda_0-\lambda} + i\beta,\;\lambda\in\DD,$$ for some real constant $\beta$, what is a well-known form of geodesics in $\HHL^n$.
\end{EX}

\begin{EX}\label{ex_pas_w_c}
A map $\varphi\in\mathcal{M}$ with the boundary measure $\mu$ is a complex geodesic for the strip $\SS$ iff
\begin{equation}\label{eq_ex_s_geod_miara_iff}
\mu=\chi_{\lbrace\lambda\in\TT:\, \BLHL{}>0\rbrace}\,d\LEBT 
\end{equation}
for some function $h:\lambda\mapsto \bar a\lambda^2+b\lambda+a$ with $a\in\CC$, $b\in\RR$, $|b|<2|a|$  (the last condition is equivalent to $\LEBT({\lbrace\lambda\in\TT: \BLHL{}>0\rbrace})\in (0,2\pi)$).

Indeed, assume that $\varphi$ is a complex geodesic for $\SS$ and let $h$ be as in Theorem \ref{th_wkw_miara}.
The vector $\BLHL{}$ is outward from $\RE\SS=(0,1)$ at $\RE\varphi^*(\lambda)\in\partial\RE\SS=\lbrace 0,1\rbrace$ for a.e. $\lambda\in\TT$, so $\RE\varphi^*(\lambda)=1$ when $\BLHL{}>0$, and $\RE\varphi^*(\lambda)=0$ when $\BLHL{}<0$ (for a.e. $\lambda$).
Thus $\mu=\RE\varphi^*d\LEBT=\chi_{\lbrace\lambda\in\TT:\,\BLHL{}>0\rbrace}\,d\LEBT$.
As $\varphi$ is non-constant, there is $0\neq\mu\neq\LEBT$, and hence $\LEBT(\lbrace\lambda\in\TT:\,\BLHL{}>0\rbrace)\in (0,2\pi)$.

It is easy to check (using Theorem \ref{th_wkw_miara} with the same $h$ as in \eqref{eq_ex_s_geod_miara_iff}) that any map $\varphi\in\mathcal{M}$ with the boundary measure of the form \eqref{eq_ex_s_geod_miara_iff} is a geodesic for $\SS$.

Of course, formulas for geodesics in $\SS$ are well-known, and it is good to write explicitely the formula for a map $\varphi$ with the boundary measure of the form \eqref{eq_ex_s_geod_miara_iff} (we shall need it in Example \ref{ex_klis_domain}).
By the Poisson formula we have
\begin{equation*}
\varphi(\lambda) = \frac{1}{2\pi}\int_{\lbrace\zeta\in\TT:\,\bar\zeta h(\zeta)>0\rbrace}\frac{\zeta+\lambda}{\zeta-\lambda}d\LEBT(\zeta) + i\IM\varphi(0), \;\lambda\in\DD.
\end{equation*}
The mapping
\begin{equation}\label{eq_ex_s_tau}
\tau(\lambda):=-\frac i\pi \log\left(i\,\frac{1+\lambda}{1-\lambda}\right),
\end{equation}
where $\log$ denotes the branch of the logarithm with the argument in $[0,2\pi)$, is a biholomorphism from $\DD$ to $\SS$.
It extends continuously to $\CDD\setminus\lbrace -1,1\rbrace$, and it sends the arc $\lbrace\lambda\in\TT:\IM\lambda>0\rbrace$ to the line $1+i\,\RR$ and the arc $\lbrace\lambda\in\TT:\IM\lambda<0\rbrace$ to the line $i\,\RR$.
The map $$\widetilde{\varphi}(\lambda) := \tau\left(i T_c(\tfrac{\bar a}{|a|}\lambda)\right)+i\IM\varphi(0),$$
where $$c=\frac{-b}{2|a|+\sqrt{4|a|^2-b^2}},$$ is a complex geodesic for $\SS$.
There is $\IM\widetilde{\varphi}(0)=\IM\varphi(0)$, because $c\in (-1,1)$.
One can check that $\IM(i T_c(\tfrac{\bar a}{|a|}\lambda))\in\BLHL{}\,(0,\infty)$ for every $\lambda\in\TT$, and hence $\widetilde{\varphi}$ sends the arc $\lbrace\lambda\in\TT:\BLHL{}>0\rbrace$ to the line $1+i\,\RR$ and the arc $\lbrace\lambda\in\TT:\BLHL{}<0\rbrace$ to the line $i\,\RR$.
This gives $\RE\varphi^*=\RE\widetilde{\varphi}^*$ a.e. on $\TT$, so $\varphi=\widetilde{\varphi}$, as both of them are of class $H^1$ and $\IM\varphi(0)=\IM\widetilde{\varphi}(0)$.

In particular, we have the equality
\begin{equation}\label{eq_ex_s_compl_geod_fi_h}
\varphi_h(\lambda):=\frac{1}{2\pi}\int_{\lbrace\zeta\in\TT:\,\bar\zeta h(\zeta)>0\rbrace}\frac{\zeta+\lambda}{\zeta-\lambda}d\LEBT(\zeta) = \tau\left(i T_c(\tfrac{\bar a}{|a|}\lambda)\right),\;\lambda\in\DD.
\end{equation}
We shall use it in Example \ref{ex_klis_domain}.
Let us recall that the above equality holds for every $h:\CC\to\CC$ of the form $\bar a\lambda^2+b\lambda+a$, $a\in\CC$, $b\in\RR$, with $|b|<2|a|$, or equivalently: $\LEBT(\lbrace\lambda\in\TT:\BLHL{}>0\rbrace)\in(0,2\pi)$.
\end{EX}

\begin{LEM}\label{lemat_odcinki}
Let $D\subset\CC^n$ be a taut convex tube and let $V$ be a real $m\times n$ matrix with linearly independent rows $v_1,\ldots,v_m\in\RR^n$, $m\geq 1$, such that the domain $$\widetilde{D}:=\lbrace V\cdot z:z\in D\rbrace$$ is a taut convex tube in $\CC^m$.
\begin{enumerate}
\renewcommand{\theenumi}{(\roman{enumi})}
\renewcommand{\labelenumi}{\theenumi}
\item\label{lemat_odcinki_1} Let $\varphi:\DD\to D$ be a complex geodesic for $D$, and let $h$ be as in Theorem \ref{th_wkw_miara}.
If $\BLHL{}\in\SPAN_{\RR}\lbrace v_1,\ldots,v_m\rbrace$ for every $\lambda\in\TT$, then the mapping $\widetilde{\varphi}:\lambda\mapsto V\cdot\varphi(\lambda)$ is a complex geodesic for $\widetilde{D}$.
\item\label{lemat_odcinki_2} If for a holomorphic map $\varphi:\DD\to D$ the mapping $\widetilde{\varphi}:\lambda\mapsto V\cdot\varphi(\lambda)$ is a complex geodesic for $\widetilde{D}$, then $\varphi$ is a complex geodesic for $D$.
\end{enumerate}
\end{LEM}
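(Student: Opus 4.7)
Part \ref{lemat_odcinki_2} is immediate. If $\widetilde{f}:\widetilde{D}\to\DD$ is a left inverse for $\widetilde{\varphi}$, then $f(z):=\widetilde{f}(V\cdot z)$ is well-defined on $D$ (since $V\cdot D=\widetilde{D}$), holomorphic, with values in $\DD$, and satisfies $f\circ\varphi=\widetilde{f}\circ\widetilde{\varphi}=\textrm{id}_{\DD}$. Hence $f$ is a left inverse for $\varphi$ and $\varphi$ is a complex geodesic for $D$.

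For part \ref{lemat_odcinki_1} the plan is to verify the criterion of Theorem \ref{th_wkw_miara} for $\widetilde{\varphi}$ and $\widetilde{D}$. Let $\mu$ be the boundary measure of $\varphi$; then by the property recalled in Section \ref{sect_preliminaries}, the boundary measure of $\widetilde{\varphi}=V\cdot\varphi$ is $\widetilde{\mu}:=V\cdot\mu$. What I need to produce is a map $\widetilde{h}:\CC\to\CC^m$ of the form $\overline{\tilde a}\lambda^2+\tilde b\lambda+\tilde a$ with $\widetilde{h}\not\equiv 0$ such that $\bar\lambda\widetilde{h}(\lambda)\bullet(\RE w\,\DLEBT(\lambda)-d\widetilde{\mu}(\lambda))\leq 0$ for every $w\in\widetilde{D}$.

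The construction of $\widetilde h$ is forced by the hypothesis. On $\TT$ one has $\bar\lambda h(\lambda)=2\,\RE(\bar a\lambda)+b\in\SPAN_\RR\{v_1,\ldots,v_m\}$. Averaging over $\lambda\in\TT$ yields $b\in\SPAN_\RR\{v_j\}$, and then evaluating at $\lambda=1$ and $\lambda=i$ gives $\RE a,\IM a\in\SPAN_\RR\{v_j\}$, so $a\in\SPAN_\CC\{v_j\}$. Since the rows of $V$ are linearly independent, $V$ has rank $m$ and $VV^T$ is invertible. I will set
$$\tilde a:=(VV^T)^{-1}V a\in\CC^m,\quad \tilde b:=(VV^T)^{-1}V b\in\RR^m,\quad \widetilde{h}(\lambda):=\overline{\tilde a}\lambda^2+\tilde b\lambda+\tilde a.$$
Because $V^T(VV^T)^{-1}V$ is the identity on the real span of $v_1,\ldots,v_m$ (and hence on its complexification), this forces $V^T\tilde a=a$ and $V^T\tilde b=b$, so $V^T\widetilde{h}(\lambda)\equiv h(\lambda)$ as polynomials in $\lambda$. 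In particular $\widetilde{h}\not\equiv 0$ because $h\not\equiv 0$.

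The final step is to check the negativity of the measure. Since $V\RR^n=\RR^m$ and both $D$, $\widetilde{D}$ are tubes, every $w\in\widetilde{D}$ has the form $w=V\cdot z$ for some $z\in D$, with $\RE w=V\cdot\RE z$. Using the identity $(V\cdot x)\bullet y=x\bullet(V^T\cdot y)$ (for $x\in\CC^n$, $y\in\CC^m$) and $\widetilde{\mu}=V\cdot\mu$, I obtain
$$\bar\lambda\widetilde{h}(\lambda)\bullet(\RE w\,\DLEBT(\lambda)-d\widetilde{\mu}(\lambda))=\bar\lambda(V^T\widetilde{h}(\lambda))\bullet(\RE z\,\DLEBT(\lambda)-d\mu(\lambda))=\bar\lambda h(\lambda)\bullet(\RE z\,\DLEBT(\lambda)-d\mu(\lambda)),$$
and the right-hand side is a negative measure by Theorem \ref{th_wkw_miara} applied to the geodesic $\varphi$. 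I do not expect any serious obstacle: the linear-algebra step and the bookkeeping with transposes are both routine, and the only subtle point is recognizing the hypothesis as the assertion that $(a,b)$ lies in the natural image of $V^T$, which is exactly what allows $\widetilde{h}$ to be defined via the pseudo-inverse of $V$.
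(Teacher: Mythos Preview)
Your proof is correct and follows essentially the same approach as the paper: construct $\widetilde{h}$ so that $V^T\widetilde{h}=h$ and then verify the measure criterion of Theorem~\ref{th_wkw_miara} for $\widetilde{\varphi}$ by pulling the computation back to $\varphi$, while part~\ref{lemat_odcinki_2} is handled by composing a left inverse with $V$. The only cosmetic difference is that the paper writes $\widetilde{h}(\lambda)=(V^T)^{-1}h(\lambda)$, viewing $V^T$ as an isomorphism from $\CC^m$ onto $\SPAN_\CC\{v_1,\ldots,v_m\}$, whereas you realize the same map via the explicit pseudo-inverse $(VV^T)^{-1}V$ and spell out why $a,b$ lie in that span.
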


The above lemma let us 'decrease' the dimension $n$, when we are trying to find a formula for $\varphi$, provided that the functions $h_1,\dots,h_n$ are linearly dependent.
In such situation, if we know formulas for geodesics in $\widetilde{D}$ (e.g. for $m=1$, because then $\widetilde{D}$ is a strip or a half-plane in $\CC$), then by \ref{lemat_odcinki_1} we obtain some information about $\varphi$, and by \ref{lemat_odcinki_2} we conclude that we cannot get anything more if we have no additional knowledge.
We use this lemma in Example \ref{ex_klis_domain} for $D$ with $\partial\RE D$ consisting of segments and half-lines.

The situation when $h_1,\dots,h_n$ are linearly dependent occurs e.g. when for some proper affine subspace $W$ of $\RR^n$ there is $\RE\varphi^*(\lambda)\in \INT_W(W\cap\partial\RE D)$ on the set of positive $\LEBT$ measure ($\INT_W$ denotes the interior with respect to $W$; if the set $\INT_W(W\cap\partial\RE D)$ is non-empty, then $W\cap\RE D=\varnothing$), because then the vectors $\BLHL{}$ are orthogonal to $W$ on the set of positive measure and hence on whole $\TT$ (by the identity principle).

Let us note that in the situation as in \ref{lemat_odcinki_2} the map $\varphi$ admits in fact a left inverse defined on the convex tube domain $\lbrace z\in\CC^n: V\cdot z\in\widetilde{D}\rbrace$, which may be larger than $D$ and not taut (its base may contain real lines).

\begin{proof}
We prove the first part.
The matrix $V^{T}$ may be viewed as a complex linear isomorphism from $\CC^m$ to $\SPAN_{\CC}\lbrace v_1,\ldots,v_m\rbrace$.
The mapping $\widetilde{h}:\CC\to\CC^m$ defined as $\widetilde{h}(\lambda)=\left(V^{T}\right)^{-1}\cdot h(\lambda)$ is of the form $\bar a\lambda^2+b\lambda+a$ (with some $a\in\CC^m$, $b\in\RR^m$) and it satisfies $\widetilde{h}(\lambda)^{T}\cdot V=h(\lambda)^{T}$ and $\widetilde{h}\not\equiv 0$.
We are going to apply Theorem \ref{th_wkw_miara} for $\widetilde{\varphi}$, $\widetilde{D}$, $\widetilde{h}$.
By weak-* limit argument, the boundary measure $\widetilde{\mu}$ of $\widetilde{\varphi}$ equals $V\cdot d\mu$.
For any $z\in D$ there is
\begin{eqnarray*}
\bar\lambda\widetilde{h}\left(\lambda)\bullet(V\cdot\RE z\,d\LEBT(\lambda)-d\widetilde{\mu}(\lambda)\right) &=& \bar\lambda\widetilde{h}(\lambda)^{T}\cdot V\cdot(\RE z\,d\LEBT(\lambda)-d\mu(\lambda))\\
&=& \BLHL{}\bullet(\RE z\,d\LEBT(\lambda)-d\mu(\lambda)),
\end{eqnarray*}
what is a negative measure.

To prove the second part it suffices to observe that if $f:\widetilde{D}\to\DD$ is a left inverse for $\widetilde{\varphi}$, then the map $z\mapsto f(V\cdot z)$, defined on the domain $\lbrace z\in\CC^n: V\cdot z\in\widetilde{D}\rbrace\supset D$, is a left inverse for $\varphi$.
\end{proof}

\begin{LEM}\label{lemat_kanty}
Let $D\subset\CC^n$ be a taut convex tube and let $p\in\partial\RE D$.
Define $$V:=\lbrace v\in\RR^n:\langle \RE z-p,v\rangle<0,\,z\in D\rbrace.$$
Let $\varphi:\DD\to D$ be a complex geodesic for $D$ with the boundary measure $\mu$ and let $h$ be as in Theorem \ref{th_wkw_miara}.
Put $$A:=\lbrace \lambda\in\TT:\BLHL{}\in\INT V\rbrace.$$
Then $$\chi_{A}\,d\mu = p\chi_{A}\,d\LEBT,$$
and $$\RE \varphi^*(\lambda)=p\text{ for every }\lambda\in A.$$
\end{LEM}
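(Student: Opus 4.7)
Set $\sigma := \mu - p\,\DLEBT$. My plan is to first show that $\sigma|_A=0$ and then to read off the statement $\RE\varphi^*=p$ on $A$ from standard Poisson theory. The strategy is to produce two measure inequalities for $\sigma$ whose combination, inside the open cone $V$, forces the polar density of $\sigma$ to vanish on $A$.

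For the first inequality, I would use condition \eqref{eq_lem_measures} of Theorem \ref{th_wkw_miara} at the boundary point $p$. Since $p\in\partial\RE D\subset\overline{\RE D}$, choose real points $q_k\in\RE D\subset D$ with $q_k\to p$; the measures $\BLHL{}\bullet(q_k\,\DLEBT(\lambda)-d\mu(\lambda))$ are negative for every $k$, and they converge in total variation (the bounded continuous functions $\BLHL{}\bullet q_k$ converge uniformly to $\BLHL{}\bullet p$ on $\TT$) to $-\BLHL{}\bullet d\sigma$. Hence $\BLHL{}\bullet d\sigma\geq 0$. For the second inequality, note that for every fixed $v\in V$ one has $\langle\RE z-p,v\rangle<0$ on $D$, and by the remark preceding Example \ref{ex_iloczyn_lewych_polplaszczyzn} this translates into the measure inequality $v\bullet d\sigma\leq 0$. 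I would then take the polar decomposition $d\sigma=\rho\,d|\sigma|$ (so $\rho:\TT\to\RR^n$ with $|\rho|=1$ $|\sigma|$-a.e.), pick a countable dense subset $\{v_k\}\subset V$, and by the usual density-and-continuity argument conclude that for $|\sigma|$-a.e.\ $\lambda\in\TT$ one has $\rho(\lambda)\bullet v\leq 0$ for every $v\in V$.

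The core step is to restrict to $A$. For $\lambda\in A$ we have $\BLHL{}\in\INT V$, so the previous conclusion gives the pointwise inequality $\rho(\lambda)\bullet\BLHL{}\leq 0$ for $|\sigma|$-a.e.\ $\lambda\in A$. On the other hand, restricting the positive measure $\BLHL{}\bullet d\sigma$ to $A$ provides the opposite integrated inequality, so the sandwich
$$0\leq\int_A\BLHL{}\bullet d\sigma=\int_A\rho(\lambda)\bullet\BLHL{}\,d|\sigma|(\lambda)\leq 0$$
collapses to equality, and therefore $\rho(\lambda)\bullet\BLHL{}=0$ for $|\sigma|$-a.e.\ $\lambda\in A$. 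The decisive geometric observation is then elementary: if $\rho(\lambda)\bullet v\leq 0$ on an open neighborhood of $\BLHL{}$ contained in $V$ with equality at $\BLHL{}$ itself, then the linear functional $v\mapsto\rho(\lambda)\bullet v$ attains an interior maximum on that neighborhood, which is possible only when $\rho(\lambda)=0$. Combined with $|\rho|=1$ $|\sigma|$-a.e., this forces $|\sigma|(A)=0$, which is exactly $\chi_A\,d\mu=p\chi_A\,\DLEBT$.

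For the second assertion, $A$ is open in $\TT$ (since $h$ is a polynomial and $V$ is open), so the measure $\mu-p\,\DLEBT$ has support contained in the compact set $\TT\setminus A$. The Poisson kernel is smooth in both variables away from the diagonal on $\TT$, hence the Poisson integral of $\mu-p\,\DLEBT$ extends harmonically across $A$ with boundary values $0$ on $A$; consequently $\RE\varphi=p+P[\mu-p\,\DLEBT]$ extends continuously to $\DD\cup A$ with value $p$ on $A$, giving $\RE\varphi^*(\lambda)=p$ for every $\lambda\in A$. The main obstacle I anticipate is conceptual rather than technical: matching the single pointwise direction $\BLHL{}$ (supplied by the specific $h$) against the family of constant directions $v\in V$ so that the two inequalities genuinely squeeze $\rho$ to zero; the hypothesis $\BLHL{}\in\INT V$ is precisely what makes this squeeze rigid.
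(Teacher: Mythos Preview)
Your argument is correct and follows a genuinely different technical route from the paper's proof. Both proofs rest on the same pair of inequalities --- the measure condition \eqref{eq_lem_measures} at $z\to p$ gives $\bar\lambda h(\lambda)\bullet d\sigma\ge 0$, while the definition of $V$ gives $v\bullet d\sigma\le 0$ for each $v\in V$ --- but they extract the conclusion differently. The paper covers $\INT V$ by open simplicial cones $Q_{v_1,\ldots,v_n}=\{\alpha_1 v_1+\cdots+\alpha_n v_n:\alpha_j>0\}$ generated by linearly independent $v_1,\ldots,v_n\in\INT V$, performs the linear change of coordinates given by the matrix $W$ with rows $v_j$, and then argues componentwise: in the new coordinates each $\widetilde\mu_j\le 0$ and each $\bar\lambda\widetilde h_j(\lambda)>0$ on the relevant set $B$, so the vanishing of the sum $\sum_j\chi_B\,\bar\lambda\widetilde h_j\,d\widetilde\mu_j$ forces each summand (hence each $\chi_B\,d\widetilde\mu_j$) to vanish. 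Your approach bypasses the covering and the coordinate change entirely, working instead with the polar decomposition $d\sigma=\rho\,d|\sigma|$ of the vector measure and the elementary observation that a linear functional with an interior maximum on an open set must vanish identically; this is cleaner and coordinate-free, at the modest cost of invoking the polar decomposition and a countable-density argument. For the second assertion both proofs amount to the same Poisson-kernel estimate on $\TT\setminus A$; your phrasing in terms of continuous extension across the open arc $A$ is slightly more explicit than the paper's one-line Poisson formula computation.
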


In the situation as in the above lemma, if $\INT V\neq\varnothing$, then we can say that $\RE D$ has a 'vertex' at the point $p$.
The aim of the lemma is to handle the situation when $\RE\varphi^*$ sends some $\lambda$'s to that vertex.
To detect some (not all) of those $\lambda$'s we analyse behaviour of the function $h$ instead of analysing behaviour of $\RE\varphi^*$; all $\lambda$'s detected in this way form the set $A$ (this is the reason for which in the definition of $A$ there is $\INT V$, not $V$ itself - for $\lambda$ such that $\BLHL{}\in V\setminus\INT V$ it is possible that $\RE\varphi^*(\lambda)\neq p$).
This approach let us state not only that $\RE\varphi^*(\lambda)=p$ for $\lambda\in A$, but much more: the boundary measure of $\varphi$ is equal to $p\,d\LEBT$ on the set $A$.
This lemma plays a key role in Example \ref{ex_klis_domain}.

If the set $\INT V$ is not empty, then it is an open, convex, infinite cone with the vertex at $0$.
In the case $n=2$ one can find two vectors $v_1,v_2\in\RR^n$, such that $\INT V$ consists of those $v\in\RR^n$, which lies 'between' $v_1$ and $v_2$, i.e. $\INT V=\lbrace v\in\RR^n:\det[v_1,v],\det[v,v_2]>0\rbrace$.

In the definition of $A$ the set $\INT V$ cannot be replaced by $V$, because then the equality $\chi_{A}\,d\mu = p\chi_{A}\,d\LEBT$ does not longer hold.
For example, take $D=\HHL^2$, $p=(0,0)$, and let $\varphi$ be given by the measure $-(\delta_1,\delta_1+\delta_{-1})$, that is $\varphi(\lambda)=\tfrac{1}{2\pi}\left(\tfrac{\lambda+1}{\lambda-1},\tfrac{\lambda+1}{\lambda-1}+\tfrac{\lambda-1}{\lambda+1}\right)$.
The map $\varphi$ is clearly a geodesic for $D$ and one can check that if $h=(h_1,h_2)$ is as in Theorem \ref{th_wkw_miara}, then $h_1(1)=0$ and $h_2\equiv 0$ (because $\BLHL{2}\geq 0$ on $\TT$ and $h$ has roots on $\TT$ at $1$ and $-1$).
As $V=[0,\infty)^2\setminus\lbrace (0,0)\rbrace$, we have $\lbrace\lambda\in\TT:\BLHL{}\in V\rbrace=\TT\setminus\lbrace 1\rbrace$, while the measure $\mu$ is clearly not equal to $(0,0)$ on $\TT\setminus\lbrace 1\rbrace$.

\begin{proof}
We may assume that $\INT V\neq\varnothing$.
For linearly independent vectors $v_1,\ldots,v_n\in\INT V$ set $$Q_{v_1,\ldots,v_n}:=\lbrace \alpha_1 v_1+\ldots+\alpha_n v_n:\alpha_1,\ldots,\alpha_n>0\rbrace.$$
One can check that the sets $Q_{v_1,\ldots,v_n}$ form an open covering of $\INT V$, and hence it suffices to show the conclusion with the set $\INT V$ replaced by $Q_{v_1,\ldots,v_n}$.

Fix $v_1,\ldots,v_n\in\INT V$ linearly independent, and let $W$ be a non-singular, real $n\times n$ matrix with rows $v_1,\ldots,v_n$.
Set $Q:=Q_{v_1,\ldots,v_n}$, $B:=\lbrace \lambda\in\TT:\BLHL{}\in Q\rbrace.$
We are going to show that $\chi_B\,d\mu=p\chi_B\,d\LEBT$ and $\RE\varphi^*=p$ on $B$.
Let $$\widetilde{\mu}:=(\widetilde{\mu}_1,\ldots,\widetilde{\mu}_n):=W\cdot (d\mu-p\,d\LEBT).$$
As $\widetilde{\mu}_j=\langle d\mu-p\,d\LEBT,v_j\rangle$, the measures $\widetilde{\mu}_j$ are negative.
The mapping $$\widetilde{h}(\lambda):=(\widetilde{h}_1(\lambda),\ldots,\widetilde{h}_n(\lambda)):=(W^{-1})^T \cdot h(\lambda),\,\lambda\in\CC,$$
satisfies $\bar\lambda \widetilde{h}(\lambda)\in (0,\infty)^n$ for $\lambda\in B$, because $(W^T)^{-1}\cdot(\alpha_1 v_1+\ldots+\alpha_n v_n)=(\alpha_1,\ldots,\alpha_n)$.
Thus
$$\chi_B(\lambda)\,\bar\lambda\widetilde{h}(\lambda)\bullet d\widetilde{\mu}(\lambda)\leq 0.$$
By the definition of $\widetilde{\mu}$ and $\widetilde{h}$ there is
$$\bar\lambda\widetilde{h}(\lambda)\bullet(W\cdot(\RE z-p)\,d\LEBT(\lambda)-d\widetilde{\mu}(\lambda)) = \BLHL{}\bullet(\RE z\,d\LEBT(\lambda)-d\mu(\lambda)),\;z\in D,$$
so the measure $\chi_B(\lambda)\,\bar\lambda\widetilde{h}(\lambda)\bullet(W\cdot(\RE z-p)\,\DLEBT(\lambda)-d\widetilde{\mu}(\lambda))$ is negative for every $z\in D$.
Tending with $z$ to $p$ we obtain
$$\chi_B(\lambda)\,\bar\lambda\widetilde{h}(\lambda)\bullet d\widetilde{\mu}(\lambda)\geq 0.$$
In summary, the measure $\chi_B(\lambda)\,\bar\lambda\widetilde{h}(\lambda)\bullet d\widetilde{\mu}(\lambda)$ is null.
As it is the sum of the negative measures $\chi_B(\lambda)\,\bar\lambda\widetilde{h}_j(\lambda)d\widetilde{\mu}_j(\lambda)$, all of them are null, and hence all $\chi_B d\widetilde{\mu}_j$ are also null.
Therefore $$\chi_B\,d\mu = W^{-1}\cdot \chi_B\,d\widetilde{\mu}+p\chi_B\,d\LEBT = p\chi_B\,d\LEBT,$$
so the first part is proved.

For the second, by the Poisson formula for $\varphi-p$ we have
$$\RE\varphi(\lambda)-p = \frac{1}{2\pi}\int_{\TT\setminus B}\frac{1-|\lambda|^2}{|\zeta-\lambda|^2}d(\mu-p\LEBT)(\zeta),\;\lambda\in\DD,$$
so $\RE\varphi(r\lambda)\to p$ as $r\to 1^-$, for any $\lambda\in B$.
\end{proof}

\begin{EX}\label{ex_klis_domain}
Consider a convex tube domain $D$ with $\RE D$ contained in $(-\infty,0)^2$ and $\partial\RE D$ being a sum of a horizontal half-line contained in $(-\infty,0]\times\lbrace 0\rbrace$, a vertical half-line contained in $\lbrace 0\rbrace\times[-\infty,0)$, and some finite number of segments.

More formally: let
$$D:=\lbrace z\in\CC^2:\langle \RE z-p_j, v_j\rangle<0\text{ for }j=1,\ldots,m\rbrace,$$ where $m\geq 2$, $v_1,\ldots,v_m\in[0,\infty)^2$, $p_0,\ldots,p_m\in(-\infty,0]^2$, $v_j=(v_{j,1},v_{j,2})$, $p_j=(p_{j,1},p_{j,2})$ are such that:
\begin{itemize}
\item $0=p_{0,1}=p_{1,1}>p_{2,1}>\ldots>p_{m-1,1}>p_{m,1}$,
\item $0=p_{m,2}=p_{m-1,2}>p_{m-2,2}>\ldots>p_{1,2}>p_{0,2}$,
\item $\det\left[v_j, v_{j+1}\right]>0$ for $j=1,\ldots,m-1$,
\item $\langle p_{j+1}-p_j,v_{j+1}\rangle = 0$ for $j=0,\ldots,m-1$
\end{itemize}
(the points $p_0$ and $p_m$ play only a supporting role).
The base of $D$ is shown on Figure \ref{fig_klis_domain}.
By the assumptions we have:
\begin{itemize}
\item $\langle \RE z-p_j, v_{j+1}\rangle<0$ for $z\in D$, $j=0,\ldots,m-1$,
\item $v_{1,1}>0$, $v_{1,2}=0$, $v_{m,1}=0$, $v_{m,2}>0$,
\item $\partial\RE D=\lbrace 0\rbrace\times(-\infty,p_{1,2}]\cup\bigcup_{j=1}^{m-2}[p_j,p_{j+1}]\cup(-\infty,p_{m-1,1}]\times\lbrace 0\rbrace$.
\end{itemize}

\begin{figure}[htb]
\centering
\includegraphics[scale=0.4]{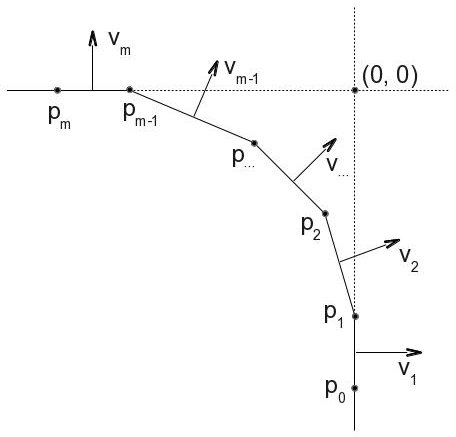}
\caption{\protect\label{fig_klis_domain}The base of $D$}
\end{figure}

Let $\varphi\in\OO(\DD,D)$ be a complex geodesic and let $\mu=(\mu_1,\mu_2)$ be its boundary measure.
Choose $h$ as in Theorem \ref{th_wkw_miara}, i.e. $h(\lambda)=\bar a\lambda^2+b\lambda+a$ with $a=(a_1,a_2)\in\CC^2$, $b=(b_1,b_2)\in\RR^2$, $h=(h_1,h_2)$, $h\not\equiv 0$, such that
\begin{equation}\label{eq_ex_klis_h_theorem_recall}
\BLHL{}\bullet(\RE z\, d\LEBT(\lambda)-d\mu(\lambda))\leq 0,\;z\in D.
\end{equation}
For a.e. $\lambda\in\TT$ the vector $\BLHL{}$ is outward from $\RE D$ at the boundary point $\RE\varphi^*(\lambda)$, so $$\BLHL{l}\geq 0,\;\lambda\in\TT,l=1,2.$$
Set
\begin{eqnarray*}
A_j &:=& \lbrace \lambda\in\TT:\det\left[\BLHL{},v_j\right]<0<\det\left[\BLHL{},v_{j+1}\right]\rbrace,\;j=1,\ldots,m-1,\\
B_j &:=& \lbrace \lambda\in\TT: \det\left[\BLHL{},v_j\right]=0\rbrace,\;j=1,\ldots,m,\\
B &:=& \bigcup_{j=1}^m B_j.
\end{eqnarray*}
The sets $A_1,\ldots,A_{m-1},B$ are pairwise disjoint and there is $$B\cup\bigcup_{j=1}^{m-1} A_j =\TT,$$ because every non-zero vector from $[0,\infty)^2$ lies 'between' some $v_j,v_{j+1}$, or is parallel to some $v_j$.

If $\LEBT(B)>0$, then for some $j_0\in\lbrace 1,\ldots,m\rbrace$ there is $\LEBT(B_{j_0})>0$ and the identity principle gives $B_{j_0}=\TT$.
Applying part \ref{lemat_odcinki_1} of Lemma \ref{lemat_odcinki} for the $1\times 2$ matrix with the row $v_{j_0}$ we get that $\langle \varphi(\cdot)-p_{j_0}, v_{j_0}\rangle$ is a geodesic for $\HHL$.
In view of part \ref{lemat_odcinki_2} of that lemma, the condition obtained is sufficient for $\varphi$ is a geodesic, so it is nothing more to do in this case.

Consider the situation when $\LEBT(B)=0$; the set $B$ is then finite and $v_{j,2}h_1-v_{j,1}h_2\not\equiv 0$, what in particular gives $h_1\not\equiv 0$, $h_2\not\equiv 0$.
By the equation \eqref{eq_ex_klis_h_theorem_recall} we get that the measure $\chi_B(\lambda)\,\BLHL{}\bullet d\mu(\lambda)$ is positive ($\chi_B\,d\LEBT$ is null).
Since $\BLHL{l}\geq 0$ on $\TT$ and $\mu_l\leq 0$, we have $\chi_B(\lambda)\BLHL{l}d\mu_l(\lambda)\leq 0$ ($l=1,2$).
Hence, the measure $\chi_B(\lambda)\,\BLHL{}\bullet d\mu(\lambda)$ is negative and in summary it is null.
As it is equal to sum of the negative measures $\chi_B(\lambda)\BLHL{1}d\mu_1(\lambda)$ and $\chi_B(\lambda)\BLHL{2}d\mu_2(\lambda)$, both of them are null.
Each $h_l$ has at most one root on $\TT$ (counting without multiplicities), so $$\chi_B\,d\mu_l=\alpha_l\delta_{\lambda_l}$$ for some $\lambda_l\in\TT$, $\alpha_l\leq 0$, with $\alpha_l h_l(\lambda_l)=0$.

Applying Lemma \ref{lemat_kanty} for $D$, $p_j$, $\varphi$ and $h$ (the set $A$ from the lemma is exactly the set $A_j$) we obtain $$\chi_{A_j}d\mu=p_j\chi_{A_j}\DLEBT,\,j=1,\ldots,m-1.$$
Therefore
\begin{equation}\label{eq_ex_klis_mu_Aj}
\mu_l = \sum_{j=1}^{m-1} p_{j,l}\chi_{A_j}d\LEBT+\alpha_l\delta_{\lambda_l},\;l=1,2,
\end{equation}
because $\mu_l = \sum_{j=1}^{m-1} \chi_{A_j}d\mu_l+\chi_Bd\mu_l$.

At this point, using \eqref{eq_ex_klis_mu_Aj} and the Poisson formula we can express the map $\varphi$ as an integral (with parameters $a$, $b$, $\alpha_1$, $\alpha_2$, and up to an imaginary constant), but in fact it is possible to derive a direct formula for it with usage of the mappings $\varphi_h$ defined in the equation \eqref{eq_ex_s_compl_geod_fi_h} in Example \ref{ex_pas_w_c}.
For this, let
\begin{equation}\label{eq_ex_klis_Cj}
C_j:=\lbrace \lambda\in\TT:\det\left[\BLHL{},v_j\right]<0\rbrace,\;j=1,\ldots,m.
\end{equation}
We have $C_1\supset C_2\supset\ldots\supset C_m$.
The set $(C_j\setminus C_{j+1})\setminus A_j\subset B$ is of zero Lebesgue measure and $A_j\subset C_j\setminus C_{j+1}$, so $\chi_{A_j}d\LEBT=\chi_{C_j}d\LEBT-\chi_{C_{j+1}}d\LEBT$.
Moreover, $\LEBT(C_1)=2\pi$ and $\LEBT(C_m)=0$, because $C_1=\lbrace\lambda\in\TT:\BLHL{2}>0\rbrace$ and $C_m=\lbrace\lambda\in\TT:\BLHL{1}<0\rbrace=\varnothing$.
Thus, the formula \eqref{eq_ex_klis_mu_Aj} may be written as
\begin{equation}\label{eq_ex_klis_mu_Cj_1_m}
\mu_l = p_{1,l}\DLEBT + \sum_{j=2}^{m-1} (p_{j,l}-p_{j-1,l}) \chi_{C_j} d\LEBT + \alpha_l\delta_{\lambda_l},\;l=1,2.
\end{equation}
The measures $\chi_{C_j}\,d\LEBT$ induces complex geodesics in $\SS$, provided that $\LEBT(C_j)\in(0,2\pi)$, because $$C_j=\lbrace\lambda\in\TT:\bar\lambda(v_{j,1}h_2(\lambda)-v_{j,2}h_1(\lambda))>0\rbrace$$ (see Example \ref{ex_pas_w_c} for details).
Therefore, it is good to remove from the sum \eqref{eq_ex_klis_mu_Cj_1_m} those $j$ which does not satisfy this condition.
For this, set
\begin{equation}\label{eq_ex_klis_k1_k2}
k_1 := \max\lbrace j\geq 1: \LEBT(C_j)=2\pi\rbrace,\; k_2 := \min\lbrace j\leq m:\LEBT(C_j)=0\rbrace.
\end{equation}
There is $1\leq k_1 < k_2 \leq m$.
By \eqref{eq_ex_klis_mu_Cj_1_m} we obtain
\begin{equation}\label{eq_ex_klis_mu_Cj_k1_k2}
\mu_l = p_{k_1,l}\DLEBT + \sum_{j=k_1+1}^{k_2-1} (p_{j,l}-p_{j-1,l}) \chi_{C_j} d\LEBT + \alpha_l\delta_{\lambda_l},\;l=1,2
\end{equation}
(note that it is possible that the above sum is empty, i.e. that $k_1+1>k_2-1$).
Now, for $j\in\lbrace k_1+1,\ldots,k_2-1\rbrace$ we have $\LEBT(C_j)\in(0,2\pi)$, and the Poisson formula let us derive the following formula for $\varphi$:
\begin{equation}\label{eq_ex_klis_summary}
\varphi_l(\lambda) = p_{k_1,l} + \sum_{j=k_1+1}^{k_2-1}(p_{j,l}-p_{j-1,l})\varphi_{v_{j,1}h_2-v_{j,2}h_1}(\lambda) + \frac{\alpha_l}{2\pi}\,\frac{\lambda_l+\lambda}{\lambda_l-\lambda}+i\beta_l,\;l=1,2,
\end{equation}
where $\beta_1$, $\beta_2$ are some real constants and $\varphi_{v_{j,1}h_2-v_{j,2}h_1}$ are as in \eqref{eq_ex_s_compl_geod_fi_h}, i.e.
$$\varphi_{v_{j,1}h_2-v_{j,2}h_1}(\lambda) = \tau\left(i T_{c_j}\left(\tfrac{\overline{v_{j,1}a_2-v_{j,2}a_1}}{|v_{j,1}a_2-v_{j,2}a_1|}\lambda\right)\right),\;\lambda\in\DD,$$
with $\tau(\lambda)=-\frac i\pi \log\left(i\,\frac{1+\lambda}{1-\lambda}\right)$ and $$c_j=\frac{-(v_{j,1}b_2-v_{j,2}b_1)}{2|v_{j,1}a_2-v_{j,2}a_1|+\sqrt{4|v_{j,1}a_2-v_{j,2}a_1|^2-(v_{j,1}b_2-v_{j,2}b_1)^2}}$$
(note that for $j=k_1+1,\ldots,k_2-1$ there is $|v_{j,1}b_2-v_{j,2}b_1|<2|v_{j,1}a_2-v_{j,2}a_1|$, because $\LEBT(C_j)\in (0,2\pi)$, and hence $c_j$ and $\varphi_{v_{j,1}h_2-v_{j,2}h_1}$ are well-defined).

In summary, a holomorphic map $\varphi:\DD\to\CC^2$ is a complex geodesic for the domain $D$ iff one of the following conditions holds:
\begin{enumerate}
\renewcommand{\theenumi}{(\roman{enumi})}
\renewcommand{\labelenumi}{\theenumi}
\item\label{ex_klis_domains_summary_1} $\varphi(\DD)\subset D$ and for some $j\in\lbrace 1\ldots m\rbrace$ the map $\lambda\mapsto\langle \varphi(\lambda)-p_j, v_j\rangle$ is a complex geodesic for $\HHL$, or
\item\label{ex_klis_domains_summary_2} $\varphi(\DD)\subset D$ and the map $\varphi$ is of the form \eqref{eq_ex_klis_summary} with some $\lambda_1,\lambda_2\in\TT$, $\alpha_1,\alpha_2\leq 0$, $\beta_1,\beta_2\in\RR$, and a map $h=(h_1,h_2)$ of the form $\bar a\lambda^2+b\lambda+a$ with $a=(a_1,a_2)\in\CC^2$, $b=(b_1,b_2)\in\RR^2$, such that $\BLHL{1}, \BLHL{2}\geq 0$ on $\TT$, $\alpha_1 h_1(\lambda_1)=\alpha_2 h_2(\lambda_2)=0$, $v_{j,1}h_2-v_{j,2}h_1\not\equiv 0$ for any $j=1\ldots,m$, where $k_1,k_2$ are given by \eqref{eq_ex_klis_k1_k2} with $C_j$ given by \eqref{eq_ex_klis_Cj}.
\end{enumerate}

So far, we have proved only that if $\varphi$ is a complex geodesic for $D$, then it satisfies one of the above conditions.
We are going to show the opposite implication now.
Take a holomorphic map $\varphi:\DD\to\CC^2$.
If $\varphi$ satisfies \ref{ex_klis_domains_summary_1}, then Lemma \ref{lemat_odcinki} does the job, so consider the situation as in \ref{ex_klis_domains_summary_2}.
As $\varphi(\DD)\subset D$, clearly $\varphi$ admits a boundary measure $\mu=(\mu_1,\mu_2)$.
There holds \eqref{eq_ex_klis_summary}, which gives \eqref{eq_ex_klis_mu_Cj_k1_k2} and hence \eqref{eq_ex_klis_mu_Cj_1_m}.
As $v_{j,1}h_2-v_{j,2}h_1\not\equiv 0$ for any $j$, the set $B$ is of $\LEBT$ measure $0$, so $\chi_{A_j}=\chi_{C_j}-\chi_{C_{j+1}}$ a.e. on $\TT$ (with respect to $\LEBT$).
Thus, \eqref{eq_ex_klis_mu_Cj_1_m} implies \eqref{eq_ex_klis_mu_Aj}.
From the equality \eqref{eq_ex_klis_mu_Aj} it follows that
\begin{equation}\label{eq_ex_klis_reverse_implication_Aj_B}
\chi_{A_j}d\mu_l = p_{j,l}\chi_{A_j}d\LEBT\text{ and }\chi_B d\mu_l = \alpha_l\delta_{\lambda_l}\text{ for } j=1,\ldots,m-1, l=1,2.
\end{equation}
Indeed, since $\TT$ is equal to sum of the pairwise disjoint sets $A_1,\ldots,A_{m-1},B$, the first statement is obvious, and for the second observe that if $\alpha_l=0$, then we are done, and if $\alpha_l<0$, then $h_l(\lambda_l)=0$, so $\lambda_l\not\in A_j$ for any $j$ and hence $\lambda_l\in B$.

If we show that for every set $E\in\lbrace A_1,\ldots,A_{m-1},B\rbrace$ and every point $z\in D$ the measure $$\BLHL{}\bullet(\RE z\,\chi_E(\lambda)\,\DLEBT(\lambda)-\chi_E(\lambda)\,d\mu(\lambda))$$ is negative, then we are done via Theorem \ref{th_wkw_miara}.

If $E=B$, then $\chi_E\,d\LEBT$ is a null measure and as $\BLHL{l}\alpha_l\,d\delta_{\lambda_l}(\lambda)=0$, $l=1,2$, by \eqref{eq_ex_klis_reverse_implication_Aj_B} the measure $\BLHL{}\bullet \chi_E(\lambda)\,d\mu(\lambda)$ is also null.

If $E=A_j$ for some $j=1,\ldots,m-1$, then by \eqref{eq_ex_klis_reverse_implication_Aj_B} we need to show that the measure $\BLHL{}\bullet(\RE z-p_j)\,\chi_{A_j}(\lambda)\,d\LEBT(\lambda)$ is negative for every $z\in D$.
But if $\lambda\in A_j$, then the vector $\BLHL{}$ lies 'between' $v_j$ and $v_{j+1}$, so $\BLHL{}=\gamma_1 v_j + \gamma_2 v_{j+1}$ for some $\gamma_1,\gamma_2\geq 0$ and hence $\BLHL{}\bullet(\RE z-p_j)\leq 0$.

Therefore, we proved that complex geodesics for $D$ are exactly the mappings of the form \ref{ex_klis_domains_summary_1} or \ref{ex_klis_domains_summary_2}.
\end{EX}

At the end, we present a simple example of convex tube domain with bounded base.
Here, the condition with radial limits (Theorem \ref{th_wkw_radial_limits}) suffices to obtain a direct formula for the real part of a geodesic $\varphi$, as its boundary measure is just $\RE\varphi^*d\LEBT$.

\begin{EX}\label{ex_kolo}
Let $$D=\lbrace (z_1,z_2)\in\RR^2: \left(\RE z_1\right)^2 + \left(\RE z_2\right)^2 < 1\rbrace.$$
Let $\varphi:\DD\to D$ be a complex geodesic and let $h$ be as in Theorem \ref{th_wkw_radial_limits}.
For a.e. $\lambda\in\TT$ the vector $\BLHL{}$ is a normal vector to $\partial\RE D$ at the point $\RE\varphi^*(\lambda)\in\partial\RE D$, so $\BLHL{}\in[0,\infty)\,\RE\varphi^*(\lambda)$.
As $\|\RE\varphi^*(\lambda)\|=1$ (we mean the euclidean norm), we get $$\RE\varphi^*(\lambda) = \frac{\BLHL{}}{\left\|\BLHL{}\right\|}\;\text{ for a.e. }\lambda\in\TT.$$
The map $h$ is of the form $\bar a\lambda^2+2b\lambda+a$ with $a\in\CC^n$, $b\in\RR^n$, $(a,b)\neq(0,0)$, so
\begin{equation}\label{eq_ex_kolo_re_fi}
\RE\varphi^*(\lambda) = \frac{\RE(\bar a\lambda)+b}{\left\|\RE(\bar a\lambda)+b\right\|},\;\text{ a.e. }\lambda\in\TT.
\end{equation}
As the boundary measure of $\varphi$ equals $\RE\varphi^*d\LEBT$, the Poisson formula let us derive an integral formula for $\varphi$.

On the other hand, by a similar reasoning one can show that any $\varphi\in\OO(\DD,D)$ satisfying \eqref{eq_ex_kolo_re_fi} with some $a\in\CC^n$, $b\in\RR^n$, $(a,b)\neq(0,0)$, is a complex geodesic for $D$.
\end{EX}

\section{Appendix}\label{sect_appendix}

In this section we are going to complete the proof of Proposition \ref{prop_warunki_wystarczajace}.
We start with a lemma, which gives Proposition \ref{prop_warunki_wystarczajace} as a corollary.
Its proof is strongly based on the proof of \cite[Lemma 8.2.2]{jarnickipflug}.
It is worth to point out that the lemma works for any domain $D$ in $\CC^n$, not necessarily tube.

\begin{LEM}\label{lem_ogolne_warunki_wystarczajace}
Let $D\subset\CC^n$ be a domain and let $\varphi:\DD\to D$ be a holomorphic map.
Suppose that there exists a map $h\in\OO(\DD,\CC^n)$ such that $\RE\left[h(0)\bullet\varphi'(0)\right]\neq 0$ and for every $z\in D$ the function $\psi_z\in\OO(\DD,\CC)$ defined as
\begin{equation*}
\psi_z(\lambda) := \frac{\varphi(0)-\varphi(\lambda)}{\lambda}\bullet h(\lambda)+\frac{h(\lambda)-h(0)}{\lambda}\bullet(z-\varphi(0))+\lambda\,\overline{h(0)\bullet(z-\varphi(0))},\;\lambda\in\DD_*
\end{equation*}
(and extended holomorphically through the origin) satisfies $$\RE\psi_z(\lambda)\leq 0,\;\lambda\in\DD.$$
Then the map $\varphi$ admits a left inverse on $D$.
\end{LEM}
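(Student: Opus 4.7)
Plan. The strategy is to construct the left inverse $f:D\to\DD$ directly from the auxiliary function
$$
g(z,\lambda):=h(\lambda)\bullet(z-\varphi(\lambda)),\qquad (z,\lambda)\in D\times\DD,
$$
which is holomorphic in both variables and satisfies the key identity $g(\varphi(w),w)=0$ for every $w\in\DD$. The natural candidate is to set $f(z)$ equal to the unique zero of $g(z,\cdot)$ in $\DD$; the task is to show this makes sense, is holomorphic in $z$, and inverts $\varphi$.

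First I would extract strict inequalities from the hypotheses. Resolving the removable singularity of $\psi_z$ at $\lambda=0$ yields
$$
\psi_z(0)=h'(0)\bullet(z-\varphi(0))-h(0)\bullet\varphi'(0),
$$
so $\RE\psi_z(0)\le 0$ for all $z\in D$, combined with the fact that $D$ is an open neighbourhood of $\varphi(0)$ and the hypothesis $\RE[h(0)\bullet\varphi'(0)]\neq 0$, forces $\RE[h(0)\bullet\varphi'(0)]>0$. Consequently $\psi_{\varphi(0)}(0)=-h(0)\bullet\varphi'(0)$ has strictly negative real part, so $\psi_{\varphi(0)}$ is nonconstant; the maximum principle then upgrades $\RE\psi_{\varphi(0)}\le 0$ to $\RE\psi_{\varphi(0)}<0$ throughout $\DD$, so that $\psi_{\varphi(0)}$ is zero-free on $\DD$.

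A short computation next establishes the identity
$$
g(z,\lambda)=\lambda\psi_z(\lambda)+\gamma(z)-\lambda^2\overline{\gamma(z)},\qquad \gamma(z):=h(0)\bullet(z-\varphi(0)),
$$
from which for the base point $z=\varphi(0)$ (where $\gamma=0$) we read off $g(\varphi(0),\lambda)=\lambda\psi_{\varphi(0)}(\lambda)$, whose unique zero in $\DD$ is the simple zero $\lambda=0$. For general $z\in D$ I would propagate this fact by an argument-principle / Rouché-type homotopy along a path connecting $z$ to $\varphi(0)$ inside $D$, tracking the zeros of $g(\cdot,\lambda)$ on the circles $\{|\lambda|=r\}$ as $r\nearrow 1$. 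The crucial point is that the global inequality $\RE\psi_z\le 0$, read through the above identity, prevents the zero of $g(z,\cdot)$ from escaping through $\TT$ and so preserves the count at exactly one.

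Once the unique zero $\lambda=f(z)\in\DD$ is pinned down, the holomorphicity of $f$ will follow from the implicit function theorem (noting that $\partial_\lambda g(\varphi(0),0)=-h(0)\bullet\varphi'(0)\neq 0$ by the first step, and propagating by continuity), while $f(\varphi(w))=w$ for every $w\in\DD$ is forced by $g(\varphi(w),w)=0$ together with uniqueness. I expect the main technical obstacle to lie precisely in the uniqueness step: since no compactness on $D$ and no boundary-class assumption on $h$ are made, control of $g(z,\cdot)$ near $\TT$ has to be effected purely through the global inequality $\RE\psi_z\le 0$, and the sign-chase that accomplishes it is the delicate part of the argument.
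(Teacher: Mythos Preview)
Your overall architecture coincides with the paper's: set $g(z,\lambda)=h(\lambda)\bullet(z-\varphi(\lambda))$ (the paper calls this $\Phi_0$), locate a zero $f(z)\in\DD$ of $g(z,\cdot)$ varying holomorphically, and check $f\circ\varphi=\mathrm{id}$. Your identity $g(z,\lambda)=\lambda\psi_z(\lambda)+\gamma(z)-\lambda^2\overline{\gamma(z)}$ is correct and is exactly what underlies the paper's computation.

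The gap is in the step you yourself flag as ``delicate'': the assertion that the bare inequality $\RE\psi_z\le 0$ keeps the zero of $g(z,\cdot)$ from reaching $\TT$. From your identity,
\[
\frac{g(z,\lambda)}{\lambda}=\psi_z(\lambda)+\frac{\gamma(z)}{\lambda}-\lambda\,\overline{\gamma(z)},
\]
and for $\lambda=re^{i\theta}$ the real part of the extra term equals $\bigl(\tfrac1r-r\bigr)\RE\bigl(e^{-i\theta}\gamma(z)\bigr)$, which is of order $(1-r)$ but of \emph{either} sign. Since you only have $\RE\psi_z\le 0$ (not strictly negative, and with no quantitative lower bound), $\RE\bigl(g(z,\lambda)/\lambda\bigr)$ can be positive arbitrarily close to $\TT$, so the argument principle on circles $r\TT$ gives no information as $r\nearrow 1$, and a homotopy in $z$ does not by itself preclude a zero drifting to the boundary. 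There is also no reason to expect \emph{uniqueness} of the zero of $g(z,\cdot)$ for general $z$; the paper never claims it for $\epsilon=0$.

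The missing idea is the $\epsilon$-perturbation used in the paper: replace $g$ by $\Phi_\epsilon(z,\lambda)=g(z,\lambda)-\epsilon\lambda$. Then $\Psi_\epsilon:=\Phi_\epsilon/\lambda=\psi_z+\bigl(\tfrac{\gamma}{\lambda}-\lambda\bar\gamma\bigr)-\epsilon$, and since the middle term tends to $0$ uniformly on compacta in $z$ as $|\lambda|\to 1$, one gets $\RE\Psi_\epsilon<0$ on an annulus $r\le|\lambda|<1$. This gives, via the argument principle, a \emph{unique} zero $f_\epsilon(z)\in\DD$ of $\Phi_\epsilon(z,\cdot)$, holomorphic in $z$ by the usual integral formula; finally a Montel limit $\epsilon\to 0$ produces $f$ with $\Phi_0(z,f(z))=0$, and your implicit-function/identity argument near $\varphi(0)$ then shows $f\circ\varphi=\mathrm{id}$. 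In short, your plan is right in spirit but needs this perturbation-and-limit device to close the boundary control you left open.
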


The functions $\psi_z$ are defined same as in the proof of Lemma \ref{lem_wkw_h_fi_miara}.
The assumption that $\RE\psi_z(\lambda)\leq 0$ for all $z\in D$ and $\lambda\in\DD$ is clearly equivalent to the assumption that every $\RE\psi_z$ is bounded from above and $\RE\psi_z^*(\lambda)\leq 0$ for all $z\in D$ and a.e. $\lambda\in\TT$.
The last two conditions correspond to the conditions \ref{prop_warunki_wystarczajace_2} and \ref{prop_warunki_wystarczajace_1} of Proposition \ref{prop_warunki_wystarczajace}.
The reason for which Lemma \ref{lem_ogolne_warunki_wystarczajace} is not formulated in the same way as Proposition \ref{prop_warunki_wystarczajace} is to avoid using radial limits of $\varphi$, as they do not necessarily exist.

Proposition \ref{prop_warunki_wystarczajace} follows indeed from Lemma \ref{lem_ogolne_warunki_wystarczajace}, because if $D$, $\varphi$, $h$ are as in the proposition, then we have
$$\RE\psi_z^*(\lambda)=\RE\left[\BLHL{}\bullet(z-\varphi^*(\lambda))\right]\text{ for all }z\in D\text{ and a.e. }\lambda\in\TT$$
(the radial limits of $\varphi$ exist, as $D$ is a taut convex tube), so the assumptions of Lemma \ref{lem_ogolne_warunki_wystarczajace} are fulfilled.

\begin{proof}
For $\epsilon\geq 0$ define
\begin{eqnarray*}
\Phi_\epsilon(z,\lambda) &=&(z-\varphi(\lambda))\bullet h(\lambda)-\epsilon\lambda,\;z\in\CC^n,\lambda\in\DD,\\
\Psi_\epsilon(z,\lambda) &=&\frac1\lambda \Phi_\epsilon(z,\lambda),\;z\in\CC^n,\lambda\in\DD_*.
\end{eqnarray*}
We have $$\Psi_\epsilon(\varphi(0),\lambda) = \psi_{\varphi(0)}(\lambda)-\epsilon,\;\lambda\in\DD_*,\epsilon\geq 0.$$
The function $\Psi_\epsilon(\varphi(0),\cdot)$ extends holomorphically through $0$ and
$\RE\Psi_\epsilon(\varphi(0),\cdot)\leq-\epsilon$ on $\DD$.
Moreover, by the assumption $\RE\left[h(0)\bullet\varphi'(0)\right]\neq 0$ we get $\RE\Psi_0(\varphi(0),\cdot)<0$ on $\DD$.
In summary, for any $\epsilon\geq 0$ there is $\RE\Psi_\epsilon(\varphi(0),\cdot)<0$ on $\DD$, so the point $0$ is the only root of $\Phi_\epsilon(\varphi(0),\cdot)$ on $\DD$ and it is a simple root.

Assume for a moment that
\begin{equation}\label{eq_loww_istnieje_rozwiazanie_1}
\text{there exists }f\in\OO(D,\DD)\text{ such that }\Phi_0(z,f(z))=0,\;z\in D.
\end{equation}
We claim that $f$ is a left inverse for $\varphi$.
Since $\Phi_0(\varphi(0),0)=0$ and $\PDER{\Phi_0}{\lambda}(\varphi(0),0)=\Psi_0(\varphi(0),0)\neq 0$,
by the implicit mapping theorem there is an open neighbourhood $U\subset D\times\DD$ of $(\varphi(0),0)$ such that $U\cap\Phi_0^{-1}(0)$ is equal to the graph of some holomorphic function of the variable $z$ defined near $\varphi(0)$ and mapping $\varphi(0)$ to $0$.
We have $f(\varphi(0))=0$, because $0$ is the only root of $\Phi_0(\varphi(0),\cdot)$, and hence $(z,f(z))\in U$ for $z$ near $\varphi(0)$.
Let $\Gamma$ be the graph of $f$.
Clearly $U\cap\Gamma\subset U\cap\Phi_0^{-1}(0)$, so shrinking $U$ if necessary we obtain $U\cap\Gamma=U\cap\Phi_0^{-1}(0)$.
As $\Phi_0(\varphi(\lambda),\lambda)=0$ on $\DD$, there is $(\varphi(\lambda),\lambda)\in\Gamma$ for $\lambda$ near $0$.
This implies $f(\varphi(\lambda))=\lambda$ near $0$ and hence on the whole $\DD$.

It remains to prove \eqref{eq_loww_istnieje_rozwiazanie_1}, and for this it suffices to show that
\begin{equation}\label{eq_loww_istnieje_rozwiazanie_2}
\text{for any }\epsilon>0\text{ there exists }f_\epsilon\in\OO(D,\DD)\text{ such that }\Phi_\epsilon(z,f_\epsilon(z))=0,\;z\in D.
\end{equation}
Indeed, using the Montel theorem choose a sequence $(f_{\epsilon_k})_k$ ($\epsilon_k\to 0$ as $k\to\infty$) convergent to a holomorphic function $f:D\to\CC$.
As $0$ is the only root of $\Phi_\epsilon(\varphi(0),\cdot)$, we get $f_\epsilon(\varphi(0))=0$ and hence $f(D)\subset\DD$, what let us easily derive \eqref{eq_loww_istnieje_rozwiazanie_1}.

The statement \eqref{eq_loww_istnieje_rozwiazanie_2} follows from the following claim:
\begin{equation}\label{eq_loww_szacowanie_re_psi_eps}
\begin{array}{l}
\text{for every }\epsilon>0\text{ and }K\subset\subset D\text{ there exists }r\in(0,1)\text{ such that }\\
\RE\Psi_\epsilon(z,\lambda)<0\text{ for }z\in K, |\lambda|\in[r,1).
\end{array}
\end{equation}
Indeed, assume \eqref{eq_loww_szacowanie_re_psi_eps} and fix $\epsilon>0$.
Let $z\in D$ and let $r=r(\epsilon,z)$ be taken as above for $K=\lbrace z\rbrace$.
The function $\Phi_\epsilon(z,\cdot)$ has no roots in $\DD\setminus r\DD$, because $\RE\Psi_\epsilon(z,\lambda)<0$ for $|\lambda|\in[r,1)$.
Moreover,
\begin{equation}\label{eq_loww_jedyny_pierwiastek_istnienie}
\frac{1}{2\pi i}\int_{r\TT}\frac{\PDER{\Phi_\epsilon}{\lambda}(z,\lambda)}{\Phi_\epsilon(z,\lambda)}d\lambda = 1 + \frac{1}{2\pi i}\int_{r\TT}\frac{\PDER{\Psi_\epsilon}{\lambda}(z,\lambda)}{\Psi_\epsilon(z,\lambda)}d\lambda = 1
\end{equation}
(the last integral is just the index at $0$ of the curve $s\mapsto\Psi_\epsilon(z,re^{is})$, equal to $0$ by \eqref{eq_loww_szacowanie_re_psi_eps}), so $\Phi_\epsilon(z,\cdot)$ has only one root in $\DD$ (counting with multiplicities).
Denote this root by $f_\epsilon(z)$.
We have the function $f_\epsilon:D\to\DD$ such that $\Phi_\epsilon(z,f_\epsilon(z))=0$, so we only need to show that it is holomorphic.

Fix $K\subset\subset D$ and let $r=r(\epsilon,K)$ be as in \eqref{eq_loww_szacowanie_re_psi_eps}.
Again, $\Phi_\epsilon(z,\cdot)$ has no roots in $\DD\setminus r\DD$ for $z\in K$, so $f_\epsilon(K)\subset r\DD$.
As $f_\epsilon(z)$ is the only root of $\Phi_\epsilon(z,\cdot)$ and it belongs to $r\DD$, we have the formula
\begin{equation}\label{eq_loww_jedyny_pierwiastek_wzor}
f_\epsilon(z) = \frac{1}{2\pi i}\int_{r\TT}\lambda\frac{\PDER{\Phi_\epsilon}{\lambda}(z,\lambda)}{\Phi_\epsilon(z,\lambda)}d\lambda,\;z\in K,
\end{equation}
what implies that $f_\epsilon$ is holomorphic in $\INT K$.
As $K$ is arbitrary, we obtain $f_\epsilon\in\OO(D,\DD)$.

It remains to show \eqref{eq_loww_szacowanie_re_psi_eps}.
Fix $\epsilon>0$ and $K\subset\subset D$.
For $z\in K$ and $\lambda\in\DD_*$ we have $$\RE\Psi_\epsilon(z,\lambda) = \RE\psi_z(\lambda)+ \RE\left[\frac1\lambda\,h(0)\bullet(z-\varphi(0)) -\lambda\,\overline{h(0)\bullet(z-\varphi(0))}\right]-\epsilon.$$
The second term of the right hand side tends uniformly (w.r.t $z\in K$) to $0$ as $|\lambda|\to 1$, and the first term is non-positive.
This gives \eqref{eq_loww_szacowanie_re_psi_eps} and finishes the proof.
\end{proof}

%
%



%
%
%

\end{document}